\documentclass[10pt,a4paper]{amsart}
\usepackage{graphicx, amssymb, xcolor,pdfsync}
\usepackage[all,cmtip]{xy}
\numberwithin{equation}{section}
\usepackage{mathrsfs}
\usepackage{tikz}
\usepackage{tikz-cd}
\usepackage{faktor}
\usepackage{setspace}
\onehalfspacing


\usepackage{hyperref}
\usepackage{graphicx}
\usepackage{yhmath}
\usepackage{mathdots}
\usepackage{MnSymbol}
\newcommand{\m}{\mbox{ mod }}
\hypersetup{
    colorlinks=true,       
    linkcolor=blue,          
    citecolor=blue,        
    filecolor=blue,      
    urlcolor=blue           
}
\usepackage{tikz}
\usetikzlibrary{matrix,arrows}
\DeclareMathOperator{\Aut}{Aut}
\usepackage[switch]{lineno}
\usepackage{yfonts}

\topmargin0.in
\textheight9.6in
\oddsidemargin0.1in
\evensidemargin0.1in
\textwidth6.8in
\advance\hoffset by -0.9 truecm

\begin{document}

\newcommand{\s}{\vspace{0.2cm}}
\newcommand{\Sp}{\mbox{Sp}}
\newcommand{\sy}{\mbox{sym}}
\newcommand{\GP}{\mathcal{G}_p}
\newcommand{\FP}{\mathcal{F}_p}
\newcommand{\G}{\mathbb{Z}_p^2 \rtimes \mathbf{D}_3}
\newcommand{\Z}{\mathbb{Z}}
\newcommand{\D}{\mathbf{D}}
\newcommand{\Q}{\mathcal{Q}}
\newcommand{\HH}{\mathbb{H}}
\newcommand{\oph}{orientation-preserving homeomorphism}

\newtheorem{theo}{Theorem}
\newtheorem{prop}{Proposition}
\newtheorem{coro}{Corollary}
\newtheorem{lemm}{Lemma}
\newtheorem{claim}{Claim}
\newtheorem{conj}{Conjecture}
\newtheorem{example}{Example}
\theoremstyle{remark}
\newtheorem{rema}{\bf Remark}
\newtheorem*{rema1}{\bf Remark}
\newtheorem*{defi}{\bf Definition}
\newtheorem*{theo*}{\bf Theorem}
\newtheorem*{coro*}{Corollary}
\newtheorem*{conj*}{Conjecture}
\newtheorem*{theoA}{\bf Theorem A}
\newtheorem*{theoB}{\bf Theorem B}
\newtheorem*{theoC}{\bf Theorem C}
\newtheorem*{theoD}{\bf Theorem D}
\newtheorem*{coroA1}{\bf Corollary A1}
\newtheorem*{coroA2}{\bf Corollary A2}
\newtheorem*{coroB1}{\bf Corollary B1}
\newtheorem*{coroB2}{\bf Corollary B2}
\newtheorem*{coroC1}{\bf Corollary C1}
\newtheorem*{coroC2}{\bf Corollary C2}
\newtheorem*{coroD1}{\bf Corollary D1}
\newtheorem*{coroD2}{\bf Corollary D2}

\title{On orientably-regular maps of Euler characteristic $-2p^2$}
\date{}

\author{Tom\'as Foncea E. and Sebasti\'an Reyes-Carocca}
\address{Departamento de Matem\'aticas, Facultad de Ciencias, Universidad de Chile, Las Palmeras 3425, Santiago, Chile}
\email{tomas.foncea@ug.uchile.cl, sebastianreyes.c@uchile.cl}

\thanks{Partially supported by ANID Fondecyt Regular  Grants 1220099 and 1230708}
\keywords{Maps and hypermaps, automorphism group, Riemann surfaces}
\subjclass[2010]{05E18, 20B25, 57M15, 57M60}

\begin{abstract} 
In this article, we study orientably-regular maps of Euler characteristic $-2p^2$ and classify those that admit a group of orientation-preserving automorphisms of order $10p^2$, where $p$ is a prime number. Along the way, we classify all compact Riemann surfaces (or complex algebraic curves) of genus $1+p^2$ endowed with a group of conformal automorphisms of order $5p^2$.
\end{abstract}
\maketitle
\thispagestyle{empty}

\section{Introduction and statement of the results} 

The study of maps on compact orientable real surfaces represents a classical and rich area of research, bridging algebra, geometry and topology. The origins of this area go back to the ancient Greeks, as maps generalise the classically known Platonic solids. We refer the reader to the survey article by Širáň \cite{Si13} for a comprehensive overview of this topic. The theory has subsequently been extended to include surfaces that may be non-orientable and also with nonempty boundary; see, for instance, \cite{BS85}, \cite{JS94} and \cite{IS94}. The broad problem of classifying such objects is at the core of important and recent developments, and has attracted considerable interest over the past decades. This classification can be achieved by considering different features. For instance, by considering maps satisfying some prescribed graph-theoretical property, or whose automorphism groups share a given group-theoretical property. Besides, from a topological point of view, a central problem is that of classifying all maps on real surfaces of a given Euler characteristic.

\s

In this article, we restrict attention to orientably-regular maps, meaning that the supporting compact surface is orientable, and the orientation-preserving automorphism group of the map acts transitively on the set of edges of the graph. The classification of orientably-regular maps of a given low genus was already known in the late sixties; see, for instance  \cite{G69}, \cite{Sh59} and \cite{T32} for genus $2,3,4,5$ and $6$. The advent of significant advancements in computer algebra systems and finite group theory brought about a renewed interest in the classification of these objects. Indeed, Conder and Dobcs\'anyi succeeded in obtaining in \cite{CP01} a computer-assisted classification of all orientably-regular maps for genus 2 to 15. These methods were taken further in the subsequent paper \cite{C09}, completing the classification up to genus 101. We refer to Conder's webpage \cite{C} for several --and updated-- databases.  

\s

Once the developments described above for lower genera were achieved, researchers began to address the problem of classifying orientably-regular maps embedded on surfaces of infinitely many genera. Although the general problem of determining all such maps and the algebraic structure of their automorphism groups is challenging, it tends to be more tractable when the Euler characteristic $2(1 - g)$ of the supporting surface has a simple numerical form. The simplest case to deal with is therefore that in which $g - 1$ is a prime number. By means of geometric arguments and in terms of the language of compact Riemann surfaces, Belolipetsky and Jones in \cite{BJ05} succeeded in classifying all orientably-regular maps of Euler characteristic $-2p$ with orientation-preserving automorphism groups of order $\lambda p$, where $\lambda >6$ is an integer and $p$ is a (sufficiently large) prime number.  In \cite{IJRC} these results were extended by filling in some gaps and in this way giving a complete classification in that case. On the other hand, after extending the methods employed in \cite{BNS05}, Conder, {\v S}ir\'a{\v n} and Tucker in \cite{CST10} went further by obtaining a classification of orientably-regular maps of Euler characteristic $-2p$ for any prime $p \geqslant 17$, mixing up purely group-theoretic arguments with computer-assisted programs (see also the recent article \cite{BCSS24}). 

\s

For recent articles dealing with problems of a similar nature, we refer the reader to \cite{C23}, \cite{CNS12}, \cite{CS20}, \cite{Du23}, \cite{Gill13}, \cite{nos17}, \cite{Hu19}, \cite{T24} and the references therein.

\s

A natural generalisation of the aforementioned results is that of classifying orientably-regular maps on surfaces of Euler characteristic $-2p^n$ or, equivalently, on surfaces of genus $1+p^n$, where $n \geqslant 2$ is an integer and $p$ is a prime number. Motivated by the study of regular maps with simple underlying graphs, the first step toward solving this broad problem was taken by Ma in  \cite{Ma21}. Concretely, he studied orientably-regular maps $\mathscr{M}$ on surfaces of Euler characteristic $-2p^2,$ where $p$ is an odd prime number, and provided a classification, in terms of the type, of all those having orientation-preserving automorphism group of order divisible by $p.$ After disregarding nine exceptional cases for small primes, he proved that necessarily $$|\mbox{Aut}^+(\mathscr{M})|=\lambda p^2 \, \mbox{ where } \,\, \lambda \in \{8,10,12,16,24,48\}.$$Conversely, Ma proved the existence of an orientably-regular map for each $\lambda$ as before, and gave conditions for $p$ under which such maps exist. For instance, in \cite[Lemma 2.15]{Ma21} it was proved that the case $\lambda=10$ is realised if and only if $p \equiv \pm 1 \mbox{ mod }5.$ Nevertheless, the problem of determining both the exact number and the algebraic structure of the orientation-preserving automorphism group of such maps has not been addressed previously. This article is aimed at answering these two problems for the case $\lambda=10$.

\s

We anticipate that this problem is intimately related to that of classifying compact Riemann surfaces (or, equivalently, complex algebraic curves) of genus $1+p^2$ admitting a group of conformal automorphisms of order $5p^2$. In this sense, our results can be viewed as a natural extension of those in \cite{CRC21}, which addresses the classification in the case of $3p^2$ conformal automorphisms. 

Our main result is the following.

\s

{\bf Theorem.} Let $p \geqslant 11$ be a prime number. 

\s

{\bf 1.} If $p \equiv -1 \mbox{ mod } 5$ then there are exactly two orientably-regular maps of Euler characteristic $-2p^2$ with orientation-preserving automorphism group of order $10p^2.$ These maps are reflexive. If $t_1$ and $t_2$ are the solutions of $y^2+y-1 \equiv 0 \mbox{ mod }p$ then the orientation-preserving automorphism group of them is isomorphic to $$
\langle a, b, c, d :  a^p = b^p = c^5 = d^2 = [a,b]= (da)^2=(db)^2=1, cac^{-1} = b^{t_2}, cbc^{-1} = (ab)^{t_1}, 
 dcd = a^{t_1^2} b^2c \rangle.$$

{\bf 2.} If $p \equiv 1 \mbox{ mod } 5$ then there are exactly ten orientably-regular maps of Euler characteristic $-2p^2$ with orientation-preserving automorphism group of order $10p^2.$ Two of them are reflexive, and the remaining ones form four chiral pairs. The orientation-preserving automorphism group of two pairs is isomorphic to 
$$\langle a,b,c : a^{p^2} =b^5 =c^2 =(ca)^2=[c,b]=1,bab^{-1} =a^k\rangle,$$ where $k$ is a fifth primitive root of unity in $\mathbb{Z}_{p^2};$ the orientation-preserving automorphism group of two pairs is   isomorphic to$$
\langle a,b,c, d: a^p = b^p = c^5 = d^2 = [a,b]=(da)^2=(db)^2=1,cac^{-1} = a^s,cbc^{-1} = b^{s^2}, dcd = a^{1-s}b^{1-s^2} c \rangle;$$and the orientation-preserving  automorphism group of the reflexive ones is isomorphic to $$
 \langle a,b,c, d: a^p = b^p = c^5 = d^2 = [a,b]=(da)^2=(db)^2=1,cac^{-1} = a^s,
cbc^{-1} = b^{s^4}, dcd = a^{1-s}b^{1-s^4} c \rangle,$$where $s$ is a fifth primitive root of unity in $\mathbb{Z}_{p}.$

\s

The strategy we employ to prove the main result relies on the close relationship between orientably-regular maps, triangle Fuchsian groups, surface-kernel epimorphisms and compact Riemann surfaces. Along the way, we shall obtain some interrelated results concerning orientably-regular hypermaps ({\it dessins d'enfants} in Grothendieck's terminology) and Riemann surfaces which are interesting in their own right; we mention them here. 

\s

{\bf (a) } We obtain a classification of all orientably-regular hypermaps of Euler characteristic $-2p^2$ with orientation-preserving automorphism group of order $5p^2$. We also determine their automorphism groups and the exact number of the corresponding supporting surfaces.

\s

{\bf (b) } We give a classification of all those compact Riemann surfaces of genus $1+p^2$ endowed with a group of conformal automorphisms of order $5p^2.$ We also determine the full automorphism group of such Riemann surfaces and show that they are non-hyperelliptic. 

\s

{\bf (c) } We then deduce  that there is no compact Riemann surface of genus $1+p^2$ with exactly $5p^2$ conformal  automorphisms (c.f. \cite[Theorem 1]{IRC}).

\s

This paper is organised as follows. In \S\ref{preli} we shall introduce some notations, and discuss briefly the relationship between orientably-regular maps,  orientably-regular hypermaps and Riemann surfaces. In \S\ref{ls} we shall establish three auxiliary lemmata concerning group actions on Riemann surfaces. The proof of the main result will be a direct consequence of the results proved in \S\ref{resultados}. 
 We end the article by pointing out some remaks in \S\ref{remass}.
\section{Riemann surfaces and orientably-regular (hyper)maps}\label{preli}

\subsection*{Belyi pairs} It is well-known that the category of compact Riemann surfaces is equivalent to that of complex algebraic curves. An interesting problem to consider in this framework consists of the description of all those compact Riemann surfaces that are defined over a given subfield $\mathbb{K}$ of the field of complex numbers, namely, those Riemann surfaces that are isomorphic to complex algebraic curves defined by polynomials with coefficients in $\mathbb{K}$. A consequence of the classical Weil's theorem \cite{W56} is the fact that a sufficient condition for a Riemann surface $S$ to be defined over the field of algebraic numbers $\bar{\mathbb{Q}}$ is the existence of a non-constant holomorphic function $$\beta : S \to \mathbb{P}^1 \mbox{ with three critical values.}$$Conversely, Belyi proved in \cite{B79} that this condition is also necessary, and thus opening several interesting directions for further investigation. The pair $(S, \beta)$ is called a {\it Belyi pair}. 

\subsection*{Fuchsian description of Belyi pairs} Let $\mathbb{H}$ denote the upper half-plane, and let $S$ be a compact Riemann surface of genus greater than one. The classical uniformisation theorem states that there is a co-compact Fuchsian group $\Gamma$ such that $S \cong \mathbb{H}/\Gamma.$ Moreover, if $(S, \beta)$ is a Belyi pair then $\beta$ induces a finite index inclusion $\Gamma \leqslant \Delta$ of Fuchsian groups, where \begin{equation*}\label{tri}\Delta=\Delta(n,m,k)=\langle x,y,z : x^n = y^m=z^k=xyz=1 \rangle \mbox{ for }n,m,k \in \mathbb{Z}_{\geqslant 2} \mbox{ such that }\tfrac{1}{n}+\tfrac{1}{m}+\tfrac{1}{k}< 1.\end{equation*}The group $\Delta$  is called a {\it triangle} Fuchsian group of {\it signature} $(n,m,k).$ A Belyi pair $(S, \beta)$ is called {\it regular} if the inclusion $\Gamma \leqslant \Delta$ is normal or, equivalently, if $S$ is endowed with a group of conformal automorphisms $G\cong \Delta/\Gamma$ such that $\beta$ is (equivalent to) the branched regular covering map $S \to S/G$ given by the action of $G$ on $S$. This is, in turn, equivalent to the existence of a group epimorphism $$\theta: \Delta \to G \mbox{ such that }\Gamma=\mbox{ker}(\theta).$$We say that $\theta$ is a {\it surface-kernel epimorphism} (hereafter, {\it ske} for short) and that it represents the action of $G$ on $S.$ Two ske are called {\it $G$-equivalent} if they differ by an automorphism of $G.$ In such a case, the Riemann surfaces uniformised by their kernels agree. Observe that $S/G \cong \mathbb{P}^1$ and that the action of $G$ on $S$ has precisely three short orbits corresponding to the $\beta$-fibers of the three critical values of $\beta$, that we may assume to be $\infty, 0$ and $1.$ Notice, moreover, that the $G$-stabilisers of the points lying in these short orbits have orders $n,m$ and $k$ respectively. We also say that the action of $G$ on $S$ has {\it signature} $(n,m,k)$ and that the regular Belyi pair is of {\it type} $(n,m,k).$ 

\subsection*{Orientably-regular maps} Let $X$ be a compact orientable surface. A {\it map} $\mathscr{M}$ on $X$ is an embedding of a finite connected graph $\mathscr{G}$ such that $S-\mathscr{G}$ consists of a disjoint union of topological discs; such discs are called the {\it faces} of $\mathscr{M}$. The genus and the Euler characteristic of the map is the genus and the Euler characteristic of the supporting surface. An automorphism of $\mathscr{M}$ is an incidence-preserving permutation of its edges, vertices and faces induced by homeomorphisms of the underlying surface. 
 A map is called {\it orientably-regular} if its group of orientation-preserving automorphisms acts regularly (and hence transitively) on the set of its edges. If, in addition, the map $\mathscr{M}$ admits an orientation-reserving automorphism then $\mathscr{M}$ is called {\it reflexive}; otherwise, it is called {\it chiral} and $(\mathscr{M}, \bar{\mathscr{M}})$ is a {\it chiral pair}. The {\it type} of an orientably-regular map is the pair $\{m,k\},$ where $k$ is the degree of each vertex and $m$ is the size of each face.

\subsection*{Regular Belyi pairs and orientably-regular hypermaps}

Let $(S, \beta)$ be a Belyi pair and assume the critical values of $\beta$ to be $\infty, 0$ and $1$. Then $\mathscr{D}:=\beta^{-1}([0,1])$ is map whose vertices are the preimages of $0$ and $1$. Furthermore, after colouring the vertices of $\mathscr{D}$ in two different ways, depending on whether they are preimages of $0$ or $1$, we obtain a bipartite map, also called a {\it dessin d'enfant}. Conversely, each dessin d'enfant $\mathscr{D}$ on a compact orientable surface $X$ yields a structure of compact Riemann surface $S$ on $X$ and a holomorphic function $\beta: S \to \mathbb{P}^1$ such that $(S, \beta)$ is a Belyi pair and $\mathscr{D}=\beta^{-1}([0,1])$. In this way, there is a remarkable correspondence between Belyi pairs, complex algebraic curves defined over $\bar{\mathbb{Q}}$ and dessins d'enfant. The type of the dessin d'enfant is  $\{n,m,k\}$ if the Belyi pair $(S, \beta)$ is of type $(n,m,k)$. Walsh proved in \cite{W75} that there is a bijective correspondence between bipartite maps and {\it hypermaps} of the same genus. We recall that hypermaps were formally introduced in the seventies by Cori in \cite{Cori75} as algebraic and combinatorial structures that model embeddings of hypergraphs on orientable surfaces, and that they generalise maps on surfaces. Concretely, maps of type $\{m,k\}$ are precisely hypermaps of type $\{2, m,k\},$ and therefore they are in correspondence with the finite index subgroups of triangle Fuchsian groups of signature $(2,m,k).$  Orientably-regular hypermaps and orientably-regular maps are those hypermaps and maps associated with regular Belyi pairs, and hence in correspondence with finite index normal subgroups of triangle Fuchsian groups of signature $(n,m,k)$ and $(2, m,k)$ respectively.

\s

A renewed interest has arisen in the last decades in the study of these objects, due to the remarkable Grothendieck’s theory of dessins d’enfants \cite{Gro}. This theory provides a fascinating link between hypermaps and compact Riemann surfaces, complex algebraic curves defined over number fields, and the Galois theory of algebraic number fields. In particular, this theoretical framework has offered a new approach to the study the internal structure of the absolute Galois group $\mbox{Gal}(\bar{\mathbb{Q}}/\mathbb{Q})$, whose natural action on algebraic curves defined over $\bar{\mathbb{Q}}$ (which is known to be faithful \cite{GGD2}) induces an action on hypermaps and on several interrelated objects.

\s

We refer to the articles \cite{JS96}, \cite{JS94}, \cite{JS78}, \cite{Wo97} and  the book \cite{GGD} for more details concerning these objects and the fascinating relations between them.

\section{Preliminary lemmata}\label{ls}
Let $p \geqslant 7$ be a prime number. Let $S$ be a compact Riemann surface of genus $1+p^2$ endowed with a group of conformal automorphisms $G$ of order $5p^2$ and let $\pi: S \to S/G$ denote the associated regular covering map. 

\begin{lemm} \label{l1}  $(S, \pi)$ is a regular Belyi pair of type $(5,5,5)$ and $G$ is nonabelian. \end{lemm}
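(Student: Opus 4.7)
The plan is to pin down the signature of the action of $G$ on $S$ via Riemann--Hurwitz, and then leverage the resulting surface-kernel epimorphism to rule out the abelian case.

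First I would write out Riemann--Hurwitz. Letting $(g_0;m_1,\dots,m_r)$ denote the signature of $\pi$, the hypotheses $g(S)=1+p^2$ and $|G|=5p^2$ reduce the formula to
\[
2g_0-2+\sum_{i=1}^{r}\Bigl(1-\tfrac{1}{m_i}\Bigr)=\tfrac{2}{5}.
\]
Each $m_i$ is the order of some element of $G$ and therefore divides $5p^2$; combined with $m_i\geq 2$ this gives $m_i\in\{5,p,p^2,5p,5p^2\}$, so in particular $1-1/m_i\geq 4/5$ for every $i$. Now $g_0\geq 2$ is incompatible with the right-hand side being only $2/5$, and $g_0=1$ forces $r=0$, again impossible. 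Hence $g_0=0$, which yields $\sum_{i}(1-1/m_i)=12/5$. Since each summand is at least $4/5$ and at most $1$, one sees that $r\leq 2$ gives left-hand side $\leq 2<12/5$, while $r\geq 4$ gives at least $16/5>12/5$; thus $r=3$.

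Next, with $r=3$ the equation becomes $\sum_{i=1}^{3} 1/m_i=3/5$, and $1/m_i\leq 1/p\leq 1/7$ whenever $m_i\neq 5$. A brief case analysis on how many of the $m_i$ equal $5$ then discards every alternative: a triple $(5,5,n)$ with $n\neq 5$ forces $1/n=1/5$, while a triple $(5,p,m)$ produces $1/p+1/m=2/5$, whose solutions in the admissible set $\{5,p,p^2,5p,5p^2\}$ force either $p\in\{3,5\}$ or a non-integer value of $p$, both excluded by $p\geq 7$; triples with at most one entry equal to $5$ are ruled out similarly. Consequently $(m_1,m_2,m_3)=(5,5,5)$, and $(S,\pi)$ is a regular Belyi pair of type $(5,5,5)$.

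Finally, for the nonabelian claim I would invoke the surface-kernel epimorphism $\theta:\Delta(5,5,5)\to G$ furnished by this signature; since $\Delta(5,5,5)$ is generated by two of its canonical generators, $G$ is generated by two elements of order $5$. If $G$ were abelian, the classification of abelian groups of order $5p^2$ would give $G\cong C_5\times C_{p^2}$ or $G\cong C_5\times C_p\times C_p$, and because $\gcd(5,p)=1$ the set of elements of order dividing $5$ in $G$ would coincide with a unique subgroup of order $5$. Any two such elements would then generate a subgroup of order at most $5<5p^2=|G|$, contradicting surjectivity of $\theta$. Hence $G$ is nonabelian.

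The main obstacle is the bookkeeping in the signature case analysis; once the bound $1-1/m_i\geq 4/5$ and the divisibility constraint are in place everything else is routine.
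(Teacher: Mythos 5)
Your proposal is correct and follows essentially the same route as the paper: Riemann--Hurwitz combined with the bound $m_i\geq 5$ (from $m_i\mid 5p^2$ and $p\geq 7$) forces quotient genus $0$, three branch points, and all periods equal to $5$, and then surjectivity of the resulting surface-kernel epimorphism $\Delta(5,5,5)\to G$ rules out the abelian case since an abelian $G$ would have its elements of order $5$ confined to a single subgroup of order $5$. The only difference is presentational: your case analysis on the triple $(m_1,m_2,m_3)$ can be shortened to the observation that $\sum 1/m_i=3/5$ with each $1/m_i\leq 1/5$ forces equality throughout, which is how the paper concludes.
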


\begin{proof}
Let $h$ denote the genus of $S/G.$ Assume that $\pi$ ramifies over $t$ values, and that the corresponding $\pi$-fibers have $G$-stabilisers of orders $m_1, \ldots, m_t$. The Riemann-Hurwitz formula applied to $\pi$ implies \begin{equation}\label{felix}2(1+p^2)-2=5p^2(2h-2+ E) \, \mbox{ where } \,  E=\Sigma_{i=1}^t (1-\tfrac{1}{m_i}).\end{equation}As each $m_i \geqslant 5$ and $t>0$ one has that  $E \geqslant \tfrac{4}{5}$. Now, if $h\geqslant 1$ then \eqref{felix} implies that $E \leqslant \frac{2}{5}$; a contradiction. Thus $h=0$ and therefore $E=\frac{12}{5}$ and $t=3$. Note that this also implies that  $m_1=m_2=m_3 = 5,$ and therefore $(S, \pi)$ is a regular Belyi pair of type $(5,5,5)$. Let $\theta: \Delta(5,5,5)\to G$ be a ske representing the action of $G$ on $S$. Observe that the surjectivity of $\theta$ implies that two elements of order 5 generate $G$, and therefore $G$ is necessarily nonabelian.
\end{proof}

\begin{lemm} \label{l2}If the conformal automorphism group $\mbox{Aut}(S)$ of $S$ is not isomorphic to $G$ and $\tilde{\pi}: S \to S/\mbox{Aut}(S)$ denotes the associated regular covering map, then $(S,\tilde{\pi})$ is a regular Belyi pair of type $(2,5,10)$ and $\mbox{Aut}(S)$ is isomorphic to a semidirect product $G \rtimes \mathbb{Z}_2$. 
\end{lemm}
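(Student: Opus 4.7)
The strategy is to identify the signature of the $A$-action on $S$, where $A := \mathrm{Aut}(S)$, via Riemann--Hurwitz; to exploit Singerman's classification of inclusions of Fuchsian triangle groups to narrow down the candidate signatures; and finally to eliminate the spurious ones through a Sylow analysis combined with an enumeration of surface-kernel epimorphisms. Write $|A| = 5p^2 n$ with $n \geq 2$. The quotient $S/A$ is covered by $S/G \cong \mathbb{P}^1$ and therefore has genus $0$, and denoting by $(\tilde m_1, \ldots, \tilde m_t)$ the signature of the $A$-action, Riemann--Hurwitz yields
\[
\sum_{i=1}^t \Big(1 - \tfrac{1}{\tilde m_i}\Big) \; = \; 2 + \tfrac{2}{5n}.
\]
Since each summand is at least $1/2$, one has $t \in \{3, 4\}$. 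A short case analysis (the sum being barely above $2$ forces at least three of the $\tilde m_i$ to equal $2$, leaving no integer solution for $\tilde m_4$) discards $t = 4$. Therefore the action of $A$ has triangle signature, and by Singerman's theorem the only Fuchsian triangle groups properly containing $\Delta(5,5,5)$ are $\Delta(2,5,10)$, $\Delta(3,3,5)$ and $\Delta(2,3,10)$, yielding $n \in \{2, 3, 6\}$.

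To eliminate the cases $n \in \{3, 6\}$, I would reduce modulo the $p$-Sylow subgroup $P$ of $A$. A Sylow count in $A$, combined with the Sylow count in $G$ to exclude the borderline possibility $n_p^A = 15$ when $p = 7$ (since $5 \not\equiv 1 \pmod{7}$ forbids $n_p^G \neq 1$), forces $P \triangleleft A$; hence $|A/P| = 5n \in \{15, 30\}$. If $n = 3$, then $A/P \cong \mathbb{Z}_{15}$: the canonical generators $x, y, z$ of $\Delta(3,3,5)$ must map into the order-$3$ and order-$5$ subgroups of $\mathbb{Z}_{15}$ respectively, and then the relation $xyz = 1$ confines all three images to the order-$3$ subgroup, violating surjectivity. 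If $n = 6$, then $A/P$ is an extension of $G/P \cong \mathbb{Z}_5$ by $A/G \cong S_3$, hence isomorphic to either $\mathbb{Z}_5 \times S_3$ or $\mathbf{D}_{15}$; a parallel enumeration of the possible images of the canonical generators of $\Delta(2,3,10)$ in each of these two groups again fails to produce a surjection.

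These exclusions leave $n = 2$ and the triangle group corresponding to $A$ has signature $(2, 5, 10)$, so $(S, \tilde\pi)$ is a regular Belyi pair of type $(2, 5, 10)$. Finally, since $[A : G] = 2$ is coprime to $|G| = 5p^2$, the Schur--Zassenhaus theorem supplies a complement $\mathbb{Z}_2$ of $G$ in $A$, giving $A \cong G \rtimes \mathbb{Z}_2$. The main technical hurdle will be the Sylow-plus-enumeration step excluding $n = 3$ and $n = 6$, particularly verifying $P \triangleleft A$ at the small prime $p = 7$, where one must use the congruence $5 \not\equiv 1 \pmod{7}$ applied inside $G$.
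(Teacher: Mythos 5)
Your proposal follows essentially the same route as the paper's proof: Singerman's classification of triangle-group inclusions reduces the possible signatures of the $\Aut(S)$-action to $(3,3,5)$, $(2,3,10)$ and $(2,5,10)$, the first two are eliminated by passing to the quotient by the normal Sylow $p$-subgroup and checking that the resulting group of order $15$ (resp.\ $30$) cannot be generated as required, and the semidirect product structure follows from the odd order of $G$. The only differences are cosmetic: the paper dispatches $(2,3,10)$ by noting that such an extension would factor through a $(3,3,5)$-action already excluded, whereas you enumerate the order-$30$ quotients directly (your assertion $A/G\cong S_3$ tacitly uses the normality of $\Delta(5,5,5)$ in $\Delta(2,3,10)$, but the enumeration goes through for all four groups of order $30$ anyway), and your explicit Riemann--Hurwitz derivation of the triangle signature and your Sylow care at $p=7$ make precise points the paper leaves implicit.
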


\begin{proof} Assume that $\Aut(S)$ is not isomorphic to $G$. The covering map  $S/G\to S/\Aut(S)$ yields a finite index inclusion of triangle Fuchsian groups $\Delta( 5,5,5)<\Gamma$. All the inclusions between triangle Fuchsian groups have been determined by Singerman in \cite{Sing72} (see also \cite{Sing70}). In this case, the possibilities for $\Gamma$ are given in the following diagram:\begin{center}
\begin{tikzpicture}
\node (Q1) at (0,0) {$\Delta(2,3,10)$};
\node (Q2) at (1,1) {$\Delta(2,5,10)$};
\node (Q3) at (0,2) {$\Delta(5,5,5)$};
\node (Q4) at (-1,1) {$\Delta(3,3,5)$};
\draw (Q1)--(Q4) node {};
\draw (Q3)--(Q4) node {};
\draw (Q2)--(Q3) node {};
\draw (Q1)--(Q3) node {};
\draw (Q1)--(Q2) node {};
\end{tikzpicture}
\end{center}Assume that the action of $G$ extends to an action with signature $(3,3,5)$. As $[\Delta(3,3,5): \Delta(5,5,5)]=3,$ it follows that there exist a group $G'$ of order $15p^2$ and a ske $\theta': \Delta(3,3,5)\to G'$ such that $S \cong \mathbb{H}/\mbox{ker}(\theta').$ Let $P$ be the normal Sylow $p$-subgroup of $G'$ and consider the composite group epimorphism $$\Delta(3,3,5) \stackrel{\theta'}{\longrightarrow} G' \stackrel{\epsilon}{\longrightarrow} G'/P \cong \mathbb{Z}_{15},$$where $\epsilon$ denotes the canonical projection. We then obtain that $\mathbb{Z}_{15}$ is generated by two elements whose third power is the identity; a contradiction. Observe that this reasoning also shows that the action of $G$ does not extend to an action of signature $(2,3,10)$. As $[\Delta(2,5,10): \Delta(5,5,5)]=2$, we then conclude that $\Aut(S)$ has order $10p^2$ and acts with signature $(2,5,10).$ In other words, $(S, \tilde{\pi})$ is a regular Belyi pair of type $(2,5,10).$ Finally, noting that 
$G$ is a normal subgroup of $\Aut(S)$ of odd order,  we conclude that $\mbox{Aut}(S) \cong G\rtimes \Z_2.$\end{proof}

As an application of the classical Sylow theorems,  nonabelian groups of order $5p^2$ exist if and only if $p \equiv \pm 1 \mbox{ mod }5,$ and they are given as follows. If $p \equiv 1 \m 5$ then $G$ is isomorphic to $$\mathbf{G}_1=\langle a,b : a^{p^2}=b^5=1, bab^{-1}=a^k\rangle$$where $k$ is a fifth primitive root of unity in $\Z_{p^2}$, or $G$ is isomorphic to $$\mathbf{G}_{u,v}=\langle a,b,c : a^p=b^p=c^5=[a,b]=1, cac^{-1}=a^{u}, cbc^{-1}=b^{v}\rangle$$where $(u,v) \in \{(s,s), (1,s), (s, s^2), (s, s^4)\}$ and 
$s$ is a fifth primitive root of unity in $\Z_p$. On the other hand, if $p\equiv -1\m 5$ then $G$ is isomorphic to$$\mathbf{G}_{0}=\langle a,b,c : a^p=b^p=c^5=[a,b]=1, cac^{-1}=b^{t_2}, cbc^{-1}=(ab)^{t_1}\rangle$$
where $t_1$ and $t_2$ are the solutions of $y^2+y-1\equiv 0 \m p$. 

\s

Hereafter, we consider the triangle Fuchsian group $\Delta(5,5,5)$ with its canonical presentation $$\Delta(5,5,5)=\langle x_1, x_2, x_3 : x_1^5=x_2^5=x_3^5=x_1x_2x_3=1\rangle.$$

\begin{lemm} There is no compact Riemann surface of genus $1+p^2$ endowed with a group of conformal automorphisms isomorphic to $\mathbf{G}_{1,s}$ nor to $\mathbf{G}_{s,s}.$
\end{lemm}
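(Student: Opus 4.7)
The plan is to assume for contradiction that such a Riemann surface $S$ exists with $G \in \{\mathbf{G}_{1,s}, \mathbf{G}_{s,s}\}$. By Lemma~\ref{l1} this is equivalent to the existence of a surface-kernel epimorphism $\theta\colon\Delta(5,5,5)\to G$, so the three images $\theta(x_1), \theta(x_2), \theta(x_3)$ must have order exactly five, satisfy $\theta(x_1)\theta(x_2)\theta(x_3)=1$, and jointly generate $G$. I will rule out surjectivity in each case by pinning down the structure of the order-five elements.

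For $\mathbf{G}_{1,s}$, I first observe that $a$ is central, since $cac^{-1}=a$ and $[a,b]=1$. Hence for $g=a^{i}b^{j}c^{k}$ one has $g^{5}=a^{5i}(b^{j}c^{k})^{5}$, and a direct induction using $cbc^{-1}=b^{s}$ yields
$$(b^{j}c^{k})^{5}=b^{j(1+s^{k}+s^{2k}+s^{3k}+s^{4k})}c^{5k}.$$
When $k \not\equiv 0 \m 5$, the element $s^{k}$ is a primitive fifth root of unity in $\Z_{p}$, so the exponent of $b$ vanishes; therefore $g^{5}=a^{5i}$, which is trivial iff $i \equiv 0 \m p$. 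Consequently every element of order five lies in the proper subgroup $\langle b,c \rangle$, and the image of $\theta$ cannot contain $a$, contradicting surjectivity.

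For $\mathbf{G}_{s,s}$, set $N=\langle a,b \rangle \cong \Z_{p}^{2}$; here $c$ acts on $N$ as the scalar $s$, so every subgroup of $N$ is normal in $G$. Writing elements of $G$ additively as pairs $(v,k)=vc^{k}$ with $v\in N$ and $k \in \Z_{5}$, the analogous computation shows that the elements of order five are exactly those with $k \not\equiv 0$, and that conjugation by $(w,0)$ sends $(v,k)$ to $((1-s^{k})w+v,k)$. Since $s^{k} \neq 1$, one may choose $w$ so that this equals $c^{k}$, so every element of order five is conjugate to a power of $c$. Post-composing $\theta$ with a suitable inner automorphism of $G$, I therefore assume $\theta(x_{1})=c^{k_{1}}$ and write $\theta(x_{2})=(v,k_{2})$. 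A short induction on word length shows that every element of $\langle c^{k_{1}}, (v,k_{2}) \rangle$ has its $N$-component in the line $\Z_{p} v$, so the image has order at most $5p<5p^{2}$, again contradicting surjectivity.

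The main subtlety is the normal-form reduction for $\theta(x_{1})$ in the $\mathbf{G}_{s,s}$ case: the scalar action of $c$ on $N$ means that a single off-diagonal generator can only witness one line in $N$, and it is this rigidity that blocks surjectivity. Once the image is reduced to $\langle c^{k_{1}}, (v,k_{2}) \rangle$, the induction is routine and the proof concludes.
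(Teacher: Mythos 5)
Your argument is correct and follows essentially the same route as the paper: for $\mathbf{G}_{1,s}$ you identify the order-five elements as lying in $\langle b,c\rangle$ and kill surjectivity, and for $\mathbf{G}_{s,s}$ you conjugate one generator into $\langle c\rangle$ and use the scalar action of $c$ on $N=\langle a,b\rangle$ to trap the image in a subgroup of order $5p$ (the paper phrases this as $\operatorname{im}(\theta)=\langle ab,c\rangle\cong\mathbb{Z}_p\rtimes\mathbb{Z}_5$ after a further normalisation, but the underlying rigidity is identical). No gaps.
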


\begin{proof} We argue by contradiction, assuming that such groups do act in genus $1+p^2.$ By Lemma \ref{l1}, the signature of the action is $(5,5,5)$ and therefore each action of $G \in \{\mathbf{G}_{1,s}, \mathbf{G}_{s,s}\}$ is represented by a ske \begin{equation}\label{gato}\theta:\Delta(5,5,5) \to G \mbox{ given by } x_1\mapsto a^{l_1}b^{m_1}c^{n_1},\, x_2\mapsto a^{l_2}b^{m_2}c^{n_2},\, x_3\mapsto a^{l_3}b^{m_3}c^{n_3}\end{equation} 
for some $0\leqslant l_i, m_i<p$ and $ 0< n_i<5$, for $1\leqslant i\leqslant 3$. Observe that the elements of order 5 of $\mathbf{G}_{1,s}$ are $b^mc^n$ for $0\leqslant m< p$  and $0<n<5$. It follows that $l_1=l_2=l_3=0$ in \eqref{gato}, and hence $\theta$ is not surjective. We now consider the group $\mathbf{G}_{s,s}.$ As $p \neq 5$ the Sylow $5$-subgroup have order 5 and therefore, after a suitable conjugation, we may assume $l_3=m_3=0$. Observe that if $l_2=0$ or $m_2=0$ then $\theta$ is not surjective. Thereby, after an appropriate automorphism of $\mathbf{G}_{s,s}$, we can assume $l_2=m_2=1$. It follows that each action of $\mathbf{G}_{s,s}$ in genus $1+p^2$ is $\mathbf{G}_{s,s}$-equivalent to one represented by the ske $\Delta (5,5,5)\to \mathbf{G}_{s,s}$ given by $x_1\mapsto a^{-s^{n_1}}b^{-s^{n_1}}c^{n_1},\, x_2\mapsto abc^{n_2},\, x_3\mapsto c^{n_3}$ for some $n_1, n_2, n_3 \in \{1, \ldots, 5\}$. Finally, the surjectivity implies that $\mathbf{G}_{s,s}=\langle abc^{n_2}, c^{n_3}\rangle=\langle ab, c\rangle \cong \mathbb{Z}_p \rtimes \mathbb{Z}_5$, a contradcition.\end{proof}

All the above allow us to focus only on the groups $\mathbf{G}_1, \mathbf{G}_{s,s^2}, \mathbf{G}_{s,s^4}$ and $\mathbf{G}_0.$ From now on, we consider the triangle Fuchsian group $\Delta(2,5,10)$ with its canonical presentation $$\Delta(2,5,10)=\langle y_1, y_2, y_3 : y_1^2=y_2^5=y_3^{10}=y_1y_2y_3=1\rangle.$$

 \section{The results}\label{resultados}

\subsection*{The group $\mathbf{G}_1$} 

Let $p \geqslant 11$ be a prime number such that $p\equiv 1\m 5.$ Consider the group $$\hat{\mathbf{G}}_1
=\langle a,b,c \,|\, a^{p^2}=b^5=c^2=(ca)^2=[c,b]=1, bab^{-1}=a^k \rangle,$$where $k$ be a fifth root of unity in $\mathbb{Z}_{p^2}$. Note that $\mathbf{G}_1$ is a subgroup of $\hat{\mathbf{G}}_1.$

\begin{theoA}
There are precisely four pairwise non-isomorphic compact Riemann surfaces of genus $1+p^2$ endowed with a group of conformal automorphisms isomorphic to $\mathbf{G}_{1}$. The conformal automorphism group of each of them is isomorphic to 
$\hat{{\mathbf{G}}}_{1}$
\end{theoA}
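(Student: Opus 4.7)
The plan is to classify surface-kernel epimorphisms (ske's) $\theta:\Delta(5,5,5)\to\mathbf{G}_1$ modulo the combined action of $\Aut(\mathbf{G}_1)$ (by post-composition) and the cyclic rotation $(x_1,x_2,x_3)\mapsto(x_2,x_3,x_1)$, which is the only non-trivial orientation-preserving signature automorphism of $\Delta(5,5,5)$. By Lemma~\ref{l1} every $\mathbf{G}_1$-action on a surface of genus $1+p^2$ is represented by such a $\theta$, and since $\mathbf{G}_1$ will turn out to be the unique index-two subgroup of $\hat{\mathbf{G}}_1$, counting these equivalence classes exactly counts the Riemann surfaces claimed in the statement.

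For the parametrization, a direct computation using $bab^{-1}=a^k$ shows that $(a^\ell b^m)^5=a^{\ell(1+k^m+k^{2m}+k^{3m}+k^{4m})}$; since $k$ is a primitive fifth root of unity in $\mathbb{Z}_{p^2}$, the inner sum vanishes for every $m\in\{1,2,3,4\}$, so the elements of order $5$ in $\mathbf{G}_1$ are precisely the $4p^2$ elements $a^\ell b^m$ with $m\ne 0$. Writing $\theta(x_i)=a^{l_i}b^{m_i}$, the relation $x_1x_2x_3=1$ becomes
\[
m_1+m_2+m_3\equiv 0\pmod 5 \qquad\text{and}\qquad l_1+l_2k^{m_1}+l_3k^{m_1+m_2}\equiv 0\pmod{p^2}.
\]
A short enumeration of the twelve ordered triples $(m_1,m_2,m_3)$ satisfying the first congruence groups them into four orbits under cyclic rotation, with representatives $(1,1,3),(1,2,2),(2,4,4),(3,3,4)$.

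Next I would describe $\Aut(\mathbf{G}_1)$ explicitly: since $\langle a\rangle$ is characteristic and the defining relation forces the exponent $r$ in $b\mapsto a^N b^r$ to satisfy $k^{r-1}=1$ (hence $r=1$), every automorphism has the form $\phi_{M,N}:a\mapsto a^M,\, b\mapsto a^N b$ with $M\in\mathbb{Z}_{p^2}^*$ and $N\in\mathbb{Z}_{p^2}$. This gives $|\Aut(\mathbf{G}_1)|=p^3(p-1)$, and $\phi_{M,N}$ preserves the type $(m_i)$ while transforming $l_i\mapsto Ml_i+N(k^{m_i}-1)/(k-1)$. For each of the four types I would normalize as follows: since $(k^{m_1}-1)/(k-1)$ is a unit modulo $p$, a suitable $\phi_{1,N}$ forces $l_1=0$; with $g_1=b^{m_1}$, the identity $b^{-m_2}g_2=a^{l_2k^{-m_2}}\in\langle g_1,g_2\rangle$ shows that $\theta$ is surjective if and only if $l_2\in\mathbb{Z}_{p^2}^*$; finally $\phi_{l_2^{-1},0}$ sends $l_2$ to $1$ and the cocycle relation then forces $l_3$ to a unique explicit value depending on $k$ and the type. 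The stabilizer of this normalized form in $\Aut(\mathbf{G}_1)$ is trivial, so each type contributes exactly one $\Aut(\mathbf{G}_1)$-orbit of surjective ske's, yielding four pairwise non-isomorphic Riemann surfaces.

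To identify $\Aut(S)$, I invoke Lemma~\ref{l2}: either $\Aut(S)=\mathbf{G}_1$, or $\Aut(S)\cong\mathbf{G}_1\rtimes\mathbb{Z}_2$ acting with signature $(2,5,10)$. To rule out the former I would explicitly construct, for each canonical $\theta$, an extension $\hat\theta:\Delta(2,5,10)\to\hat{\mathbf{G}}_1$ restricting to $\theta$ along the index-two inclusion $\Delta(5,5,5)<\Delta(2,5,10)$, using $c^2=1$, $cac^{-1}=a^{-1}$, $cbc^{-1}=b$; the key check is that suitably chosen images of $y_1,y_2$ have the correct orders, multiply to the inverse of an element of exact order $10$, and together generate $\hat{\mathbf{G}}_1$. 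Uniqueness of the extension follows from the fact that the subgroup $\mathrm{Inn}(\mathbf{G}_1)<\Aut(\mathbf{G}_1)$ corresponds to $\{\phi_{M,N}:M\in\langle k\rangle\}$, so $\mathrm{Out}(\mathbf{G}_1)\cong\mathbb{Z}_{p^2}^*/\langle k\rangle$ is cyclic of order $p(p-1)/5$ and contains a unique involution, namely $a\mapsto a^{-1}$, $b\mapsto b$, which is exactly the action of $c$ in $\hat{\mathbf{G}}_1$; the split extension $\mathbf{G}_1\times\mathbb{Z}_2$ is incompatible with signature $(2,5,10)$ because its $\mathbb{Z}_2$-generator would be central. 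The main obstacle in all of this is the case-by-case verification that the normalized triple produces a surjective ske and that the extension $\hat\theta$ can be built so that $\hat\theta(y_3)=(\hat\theta(y_1)\hat\theta(y_2))^{-1}$ has exact order $10$; these are routine linear-algebraic computations over $\mathbb{Z}_{p^2}$ but require careful bookkeeping.
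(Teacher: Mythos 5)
Your proposal is essentially sound and arrives at the same four surfaces, but it is organized differently enough from the paper to be worth comparing. The paper normalizes the $(5,5,5)$-epimorphisms to twelve skes $\theta_{m_1,m_2,m_3}$ and then does the counting one level up: it classifies the skes $\Theta_t:\Delta(2,5,10)\to\hat{\mathbf{G}}_1$ (four of them, pairwise inequivalent), uses the fact that $\Delta(2,5,10)$ is self-normalising to conclude that the surfaces $X_t=\mathbb{H}/\ker\Theta_t$ are pairwise non-isomorphic, and then restricts each $\Theta_t$ along the three embeddings of $\Delta(5,5,5)$ into $\Delta(2,5,10)$ to see that all twelve $\theta$'s occur among these restrictions; this simultaneously shows that the action always extends and that the twelve surfaces fall into exactly four classes. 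You instead count at the $(5,5,5)$ level, using an explicit description of $\Aut(\mathbf{G}_1)$ (every automorphism is $a\mapsto a^M$, $b\mapsto a^Nb$, which is correct) together with a permutation action on the types. Your identification of $\mathrm{Out}(\mathbf{G}_1)\cong\mathbb{Z}_{p^2}^*/\langle k\rangle$ and of its unique involution is a cleaner way to pin down the extension group than the paper's case analysis of $(s,m)=(\pm1,\pm1)$ (indeed $m=-1$ never defines an automorphism of $\mathbf{G}_1$, so some of those cases are vacuous). The price is that the existence of the extension for each of the four surfaces is deferred to an unperformed computation, which the paper carries out explicitly via the $\Theta_t$ and the three embeddings; this step is genuinely needed, since Lemma~\ref{l2} only constrains what $\mbox{Aut}(S)$ can be, not that it is larger than $\mathbf{G}_1$.

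One point needs repair. The group of orientation-preserving self-equivalences of the signature $(5,5,5)$ is not cyclic of order $3$: the normaliser of $\Delta(5,5,5)$ is $\Delta(2,3,10)$, and the quotient $\Delta(2,3,10)/\Delta(5,5,5)\cong S_3$ induces \emph{all} permutations of the three generators (up to conjugacy), not only the cyclic ones. Your final count survives because every admissible type $(m_1,m_2,m_3)$ has a repeated entry, so its $A_3$-orbit coincides with its $S_3$-orbit; but as stated, your argument that the four classes do not collapse further quotients by a group of identifications that is too small. You should either work modulo the full $S_3$-action from the outset, or argue non-isomorphy as the paper does, via the pairwise $\hat{\mathbf{G}}_1$-inequivalence of the extended skes together with the self-normalisability of $\Delta(2,5,10)$.
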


\begin{proof} Let $S$ be a compact Riemann surface of genus $1+p^2$ endowed with a group of conformal automorphisms isomorphic to $\mathbf{G}_1$.
Each such an action is represented by a ske of the form $$\theta:\Delta(5,5,5) \to \mathbf{G}_1 \, \mbox{ given by } \,  x_1\mapsto a^{l_1}b^{m_1},\, x_2\mapsto a^{l_2}b^{m_2},\, x_3\mapsto a^{l_3}b^{m_3}$$where $l_i \in \{0, \ldots, p^2-1\}$ and $m_i \in \{1, \ldots, 4\}$ for $i=1,2,3.$ Since $p \neq 5$,  after a suitable conjugation we may assume $l_3=0$. Observe that if $l_2$  were not invertible in $\mathbb{Z}_{p^2}$ then $a \notin \mbox{im}(\theta)=\langle a^{l_2}, b\rangle,$ contradicting the surjectivity of $\theta.$ Now, if $\delta$ satisfies $\delta l_2 \equiv 1 \mbox{ mod } p^2$ then 
the automorphism of $\mathbf{G}_1$ given by  $a\mapsto a^{\delta}, b\mapsto b$ shows that $\theta$ is $\mathbf{G}_1$-equivalent to \begin{equation}\label{tabac}\theta_{m_1,m_2,m_3}:\Delta\to \mathbf{G}_1 \, \mbox{ given by } \, x_1\mapsto a^{-k^{m_1}}b^{m_1}, x_2\mapsto ab^{m_2} , x_3\mapsto b^{m_3}\end{equation}
where $m_1+m_2+m_3\equiv 0\m 5$. Note that each $\theta_{m_1,m_2,m_3}$ is surjective, that there are exactly twelve of them, and that $m_1=m_2$, $m_1=m_3$ or $m_2=m_3$. Thereby, one has that $S$ is isomorphic to \begin{equation}\label{sur}S_{m_1,m_2,m_3}:=\mathbb{H}/{\text{ker}(\theta_{m_1,m_2,m_3})},\end{equation}for some $m_i$ as before. By Lemma \ref{l2}, if the conformal automorphism group of $S$ is not isomorphic to $\mathbf{G}_1$ then it is isomorphic to a semidirect product \begin{equation}\label{gp2} \mathbf{G}_1 \rtimes \mathbb{Z}_2 = \langle a,b,c : a^{p^2}=b^5=c^2=1, bab^{-1}=a^k, cac=a^{s}, cbc=a^lb^m\rangle,\end{equation} and acts with signature $(2,5,10).$ Observe that $s^2\equiv 1\m p^2$ and $m^2\equiv 1\m 5;$ therefore $(s,m)=(\pm 1,\pm 1)$. Note that $(s,m)\neq (1,1)$ since $\mathbf{G}_1 \times \mathbb{Z}_2$ cannot be generated by one element of order 5 and an involution. Note, in addition, that the fact that $\pm 1-k$ is invertible in $\Z_{p^2}$ implies that, if $m\equiv -1\m 5$ then $l\equiv 0\m p^2$. In this way, we obtain that \eqref{gp2} is isomorphic to
$\langle \mathbf{G}_1,c : c^2=1, cac=a, cbc=b^{-1} \rangle,$ isomorphic to $\langle \mathbf{G}_1,c : c^2=1, cac=a^{-1}, cbc=b^{-1} \rangle,$ or there is $l \in \{0, \ldots, p^2-1\}$ such that \eqref{gp2} is isomorphic to $\langle \mathbf{G}_1,c : c^2=1, cac=a^{-1}, cbc=a^lb \rangle.$ The fact that the first two groups do not have elements of order 10 implies that \eqref{gp2} is isomorphic to $\langle \mathbf{G}_1,c : c^2=1, cac=a^{-1}, cbc=a^l b \rangle.$ Up to isomorphism, in the latter group we can choose $l=0$ and therefore   $$\mbox{Aut}(S) \cong \hat{\mathbf{G}}_1= \langle a,b,c : a^{p^2}=b^5= c^2=1, bab^{-1}=a^k, cac=a^{-1}, [c,b]=1 \rangle.$$

\s

{\it Claim 1.} There are precisely four pairwise non-isomorphic compact Riemann surfaces of genus $1+p^2$ with conformal automorphism group isomorphic to $\hat{\mathbf{G}}_1.$

\s

Each such an action of $\hat{\mathbf{G}}_1$ is represented by a ske of the form  $\Theta:\Delta(2,5,10)\to \hat{\mathbf{G}}_1$ given by $y_1\mapsto a^{s_1}c, y_2\mapsto a^{s_2}b^{t_2}, y_3\mapsto a^{s_3}b^{-t_2}c ,$ 
where $0\leqslant s_1, s_2, s_3<p^2$ y $0<t_2< 5$. As the Sylow $2$-subgroups of $\hat{\mathbf{G}}_1$ have order two, up to a suitable conjugation, we can assume $s_1=0.$ Note that $s_2$ must be invertible since otherwise $\Theta$ is not surjective. Now, after considering the automorphism of  $\hat{\mathbf{G}}_1$ given by $a\mapsto a^{\eta}, b\mapsto b, c\mapsto c$ where $\eta s_2 \equiv 1 \mbox{ mod }p^2,$ we obtain that $\Theta$ is $\hat{\mathbf{G}}_1$-equivalent to  $$\Theta_t: \Delta(2,5,10) \to \hat{\mathbf{G}}_1 \, \mbox{ given by } \, y_1\mapsto c, y_2\mapsto ab^{t}, y_3\mapsto a^{-k^{-t}}b^{-t}c$$ for $0<t<5$. These four skes are pairwise $\hat{\mathbf{G}}_1$-inequivalent, because of there is no automorphism of $\hat{\mathbf{G}}_1$ fixing $c$ and sending 
$ab^{t}$ to $ab^{t'}$ for $t\neq t'.$ Since the normaliser of $\Delta(2,5,10)$ is $\Delta(2,5,10)$ itself, the action of conjugation at the level of kernels produces orbits of length one.  Consequently, the aforementioned skes define Riemann surfaces $S_t:=\mathbb{H}/\mbox{ker}(\Theta_t)$ that are pairwise non-isomorphic, as claimed.

\s

Observe that the map given by $x_1\mapsto y_3^{-1}y_2y_3, x_2\mapsto y_2$ and $x_3\mapsto y_3^2$ defines an embedding  $\iota_1:\Delta(5,5,5) \to \Delta(2,5,10).$  The restriction of $\Theta_t$ to $\iota(\Delta(5,5,5))\cong \Delta(5,5,5)$ 
$$\Theta_t|_{\Delta(5,5,5)}: \Delta(5,5,5) \to \mbox{im}(\Theta_t|_{\Delta(5,5,5)}) \cong \mathbf{G}_1 \, \mbox{ is given by }  \, x_1\mapsto a^{-1}b^{t},x_2\mapsto ab^{t}, x_3\mapsto a^{-k^{-t}+k^{-2t}}b^{-2t}.$$Observe that the restricted ske above is $\mathbf{G}_1$-equivalent to $\theta_{t,t,-2t}$. We then conclude that $S_{t,t,-2t}\cong X_t$. If we now consider the embeddings $\iota_2:x_1\mapsto y_3^{-1}y_2y_3, x_2\mapsto y_3^2, x_3\mapsto y_2$ and $\iota_3:x_1\mapsto y_3^2,x_2\mapsto y_3^{-1}y_2y_3, x_3\mapsto y_2$ instead of $\iota_1$ and proceed  analogously, we obtain that  $S_{t,t,-2t}\cong S_{t,-2t,t}\cong S_{-2t,t,t}\cong X_t$ for each $t \in \{1,2,3,4\}.$ We then conclude that the Riemann surfaces \eqref{sur} split into four isomorphism classes, and that the conformal automorphism group of each of them is isomorphic to $\hat{\mathbf{G}}_1.$
\end{proof}

\begin{coroA1} There are precisely twelve orientably-regular hypermaps of Euler characteristic $-2p^2$ of type $\{5,5,5\}$ with orientation-preserving automorphism group isomorphic to $\mathbf{G}_{1}$. These hypermaps are supported on four non-isomorphic Riemann surfaces, with three hypermaps on each surface.
\end{coroA1}
\begin{proof} After considering the proof of Theorem A coupled with the equivalence between regular Belyi pairs of signature $(5,5,5)$ and orientably-regular hypermaps of type $\{5,5,5\}$, it suffices to verify that the skes \eqref{tabac} are pairwise $\mathbf{G}_1$-inequivalent. Observe that if two distinct such skes were $\mathbf{G}_{1}$-equivalent, then there would be a nontrivial automorphism $\Phi$ of $\mathbf{G}_{1}$ satisfying $\Phi(b^s)=b^{s'}$ and $\Phi(ab^r)=ab^{r'}$. It is easy to see that this implies that  $s=s'$ and $r=r'$. \end{proof}

\begin{coroA2}
Up to duality, there are precisely two chiral pairs of orientably-regular maps of Euler characteristic $-2p^2$ of type $\{5, 10\}$ with orientation-preserving automorphism group isomorphic to $\hat{\mathbf{G}}_{1}$. 
\end{coroA2}

\begin{proof} In view of Claim 1 in the proof of Theorem A and after considering the equivalence between regular Belyi pairs of signature $(2,5,10)$ and orientably-regular maps of type $\{5,10\}$, it is enough to verify that the maps form two chiral pairs. If they were reflexive then there would be an automorphism $\Phi$ of $\hat{\mathbf{G}}_1$ satisfying that $\Phi(ab^t)=(ab^t)^{-1}.$ This in turn would imply that there is an automorphism of $\hat{\mathbf{G}}_1$ sending $b$ to $b^{-1},$ which is impossible as it contradicts the relation $bab^{-1}=a^k.$
\end{proof}

\subsection*{The group $\mathbf{G}_{s,s^2}$}Let $p \geqslant 11$ be a prime number such that $p \equiv 1 \m 5,$ and  consider the group $$\hat{\mathbf{G}}_{s,s^2}= \langle a,b,c,d :  a^p=b^p=c^5=d^2=[a,b]=(da)^2=(db)^2=1, cac^{-1}=a^{s}, cbc^{-1}=b^{s^2},  dcd=a^{1-s}b^{1-s^2}c\rangle,$$where $s$ is a primitive fifth root of unity in $\mathbb{Z}_p.$ Note that $\mathbf{G}_{s,s^2}$ is a subgroup of $\hat{\mathbf{G}}_{s,s^2}.$

\begin{theoB}There are precisely four pairwise non-isomorphic compact Riemann surfaces of genus $1+p^2$ endowed with a group of conformal automorphisms isomorphic to $\mathbf{G}_{s,s^2}$. The conformal automorphism group of each of them is isomorphic to 
$\hat{{\mathbf{G}}}_{s,s^2}$
\end{theoB}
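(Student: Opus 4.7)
The plan is to follow the strategy of Theorem A, adapting the calculations to the non-cyclic Sylow $p$-subgroup structure of $\mathbf{G}_{s,s^2}$, where $\langle a,b\rangle\cong \Z_p^2$ and $c$ acts on it with distinct eigenvalues $s,s^2$. Let $S$ be a compact Riemann surface of genus $1+p^2$ admitting a conformal action of $\mathbf{G}_{s,s^2}$. By Lemma \ref{l1}, this action has signature $(5,5,5)$ and is represented by a ske $\theta:\Delta(5,5,5)\to \mathbf{G}_{s,s^2}$ with $x_i\mapsto a^{l_i}b^{m_i}c^{n_i}$. Since $1-s$ and $1-s^2$ are invertible in $\Z_p$, every element of order $5$ in $\mathbf{G}_{s,s^2}$ is conjugate to a power of $c$, so after a suitable conjugation we may assume $l_3=m_3=0$. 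The relation $x_1x_2x_3=1$ then forces $l_1\equiv -l_2 s^{n_1}$, $m_1\equiv -m_2 s^{2n_1}\pmod p$ and $n_1+n_2+n_3\equiv 0 \pmod 5$, while surjectivity requires $l_2,m_2\not\equiv 0 \pmod p$ (else the $\langle c\rangle$-orbit of $a^{l_2}b^{m_2}$ lies in a proper $c$-invariant subgroup of $\Z_p^2$, and one of $a,b$ is missing from $\mathrm{im}(\theta)$). Applying the automorphism $a\mapsto a^{l_2^{-1}}$, $b\mapsto b^{m_2^{-1}}$ of $\mathbf{G}_{s,s^2}$ normalises $l_2=m_2=1$, yielding
$$\theta_{n_1,n_2,n_3}:\, x_1\mapsto a^{-s^{n_1}}b^{-s^{2n_1}}c^{n_1},\ x_2\mapsto abc^{n_2},\ x_3\mapsto c^{n_3}$$
for triples $(n_1,n_2,n_3)\in\{1,\ldots,4\}^3$ with $n_1+n_2+n_3\equiv 0\pmod 5$. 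Exactly twelve such triples exist, and each has two coinciding entries, since the sums $6,7,8,9$ of three distinct elements of $\{1,2,3,4\}$ are all nonzero modulo $5$. Hence $S \cong S_{n_1,n_2,n_3} := \HH/\ker(\theta_{n_1,n_2,n_3})$ for one such triple.

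Next I would determine $\Aut(S)$. Assume $\Aut(S)\not\cong \mathbf{G}_{s,s^2}$; by Lemma \ref{l2}, $\Aut(S)\cong \mathbf{G}_{s,s^2}\rtimes \Z_2$ acting with signature $(2,5,10)$. Conjugation by the new involution $d$ preserves $\Z_p^2$ and must commute with the $c$-action, whose eigenspaces $\langle a\rangle,\langle b\rangle$ are distinct; hence $d$ acts diagonally, with $dad=a^\alpha$, $dbd=b^\beta$ and $\alpha,\beta\in\{\pm 1\}$. A case analysis would discard the three options $(\alpha,\beta)\neq (-1,-1)$: if $\alpha$ or $\beta$ equals $+1$, the resulting extension cannot be generated by an involution together with an order-$5$ element whose product has order $10$, contradicting the signature $(2,5,10)$. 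Thus $(\alpha,\beta)=(-1,-1)$, equivalently $(da)^2=(db)^2=1$. Writing $dcd=a^lb^mc$ (the $c$-exponent is forced to be $1$ by the compatibility of $d$-conjugation with $cac^{-1}=a^s$), the involution condition $d^2=1$ is automatic, and replacing $d$ by $d\cdot a^rb^u$ shifts $(l,m)$ to $(l-r(1-s),\,m-u(1-s^2))$. Since $1-s$ and $1-s^2$ are invertible in $\Z_p$, one can normalise $(l,m)=(1-s,1-s^2)$, arriving at the presentation of $\hat{\mathbf{G}}_{s,s^2}$.

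To count the Riemann surfaces, I would classify skes $\Theta:\Delta(2,5,10)\to \hat{\mathbf{G}}_{s,s^2}$. Since the Sylow $2$-subgroup has order two, after a conjugation we may assume $\Theta(y_1)=d$. A normalisation analogous to the one above, using the diagonal automorphisms of $\hat{\mathbf{G}}_{s,s^2}$ that fix $d$, leaves a one-parameter family $\Theta_t$ for $t\in\{1,\ldots,4\}$, and these are pairwise $\hat{\mathbf{G}}_{s,s^2}$-inequivalent by an argument analogous to Corollary A1 (no automorphism of $\hat{\mathbf{G}}_{s,s^2}$ fixes $d$ and alters the $b$-exponent of $\Theta(y_2)$). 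Because the normaliser of $\Delta(2,5,10)$ is itself, these four skes give four pairwise non-isomorphic Riemann surfaces $X_t:=\HH/\ker(\Theta_t)$. Finally, using the three embeddings $\iota_1,\iota_2,\iota_3:\Delta(5,5,5)\hookrightarrow \Delta(2,5,10)$ from the proof of Theorem A, the restriction $\Theta_t|_{\iota_j(\Delta(5,5,5))}$ is $\mathbf{G}_{s,s^2}$-equivalent to $\theta_{n_1,n_2,n_3}$ for some permutation of $(t,t,-2t)$, so $S_{t,t,-2t}\cong S_{t,-2t,t}\cong S_{-2t,t,t}\cong X_t$. This collapses the twelve surfaces into four isomorphism classes, each with conformal automorphism group $\hat{\mathbf{G}}_{s,s^2}$.

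The main obstacle will lie in the second step: both the case analysis ruling out $(\alpha,\beta)\neq (-1,-1)$ via the signature-$(2,5,10)$ generation constraint, and the careful normalisation of $(l,m)$ in $dcd=a^lb^mc$ via conjugation of $d$ by $\Z_p^2$-elements to obtain the specific form $dcd=a^{1-s}b^{1-s^2}c$.
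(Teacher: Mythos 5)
Your proposal is correct in outline and arrives at the same classification, but it takes a genuinely different route at the crucial step of identifying the full automorphism group. The paper does not analyse the possible $\mathbb{Z}_2$-extensions by hand: it exhibits the explicit automorphism $\Phi$ of $\mathbf{G}_{s,s^2}$ given by $a\mapsto a^{-1}$, $b\mapsto b^{-1}$, $c\mapsto a^{1-s}b^{1-s^2}c$, checks that $\Phi(g_1,g_2,g_3)=(g_2,g_1,g_2g_3g_2^{-1})$ for the generating vector of $\theta_{n_1,n_1,n_3}$, and invokes the Bujalance--Cirre--Conder extendability criterion (Case N8 of \cite{BCC}) to conclude at once that the $(5,5,5)$-action extends to a $(2,5,10)$-action of $\hat{\mathbf{G}}_{s,s^2}$; Lemma \ref{l2} then pins down $\Aut(S)$. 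Your route instead determines abstractly what any index-two extension acting with signature $(2,5,10)$ must look like, and then establishes that the extension actually occurs by exhibiting the skes $\Theta_t$ on $\Delta(2,5,10)$ and showing their restrictions along $\iota_1,\iota_2,\iota_3$ recover all twelve $\theta_{n_1,n_2,n_3}$. This is more self-contained (no appeal to \cite{BCC}) but longer, and it front-loads work that the paper defers to Corollary B2. Two points in your sketch need more care than you give them: (i) the assertion that $d$ ``must commute with the $c$-action'' and hence acts diagonally is not automatic --- a priori $d$ could swap the eigenspaces $\langle a\rangle$ and $\langle b\rangle$ while conjugating $c$ to an element congruent to $c^j$ mod $\mathbb{Z}_p^2$; one must use that the two relations $cac^{-1}=a^s$, $cbc^{-1}=b^{s^2}$ would then force $2j\equiv 1$ and $j\equiv 2 \pmod 5$ simultaneously, which is impossible; and (ii) your construction of the four surfaces $X_t$ requires verifying that each $\Theta_t$ is a genuine ske (that $d\cdot abc^t$ has order exactly $10$ and that $\langle d, abc^t\rangle=\hat{\mathbf{G}}_{s,s^2}$), a short computation you do not mention but which your argument cannot do without, since the whole existence of the extended action rests on it in your scheme. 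Also, the inequivalence of the $\Theta_t$ hinges on the $c$-exponent of $\Theta_t(y_2)$, not the $b$-exponent as you write; the paper's Claim 2 carries out exactly this verification.
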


\begin{proof} Let $S$ be a compact Riemann surface of genus $1+p^2$ endowed with a group of conformal automorphisms isomorphic to $\mathbf{G}_{s,s^2}$. By Lemma \ref{l1}, the action of $\mathbf{G}_{s,s^2}$  is represented by a ske that, up to  $\mathbf{G}_{s,s^2}$-equivalence, we may assume to be  \begin{equation}\label{felix7} \theta_{n_1,n_2,n_3}:\Delta (5,5,5)\to \mathbf{G}_{s,s^2} \, \mbox{ given by }\, x_1\mapsto a^{-s^{n_1}}b^{-s^{2n_1}}c^{n_1},\, x_2\mapsto abc^{n_2},\, x_3\mapsto c^{n_3}\end{equation}for some $n_1, n_2, n_3 \in \{1, \ldots, 5\}$ such that $n_1+n_2+n_3 \equiv 0 \mbox{ mod }5.$ Observe that each $\theta_{n_1,n_2,n_3}$ is surjective, that there are exactly twelve of them, and that $n_1=n_2, n_1=n_3$ or $n_2=n_3.$ Thus $$S \cong X_{n_1,n_2,n_3}:= \mathbb{H}/{\text{ker}(\theta_{n_1,n_2,n_3})}$$for some $n_i$ as before. The correspondence $a\mapsto a^{-1}, \, b\mapsto b^{-1}$ and $c\mapsto a^{1-s}b^{1-s^2}c$ defines an automorphism $\Phi$ of $\mathbf{G}_{s,s^2}$ that satisfies $\Phi(g_1, g_2, g_3)=(g_2, g_1, g_2g_3g_2^{-1})$, where $g_i=\theta_{n_1, n_1, n_3}(x_i)$ for $i=1,2,3.$ It follows from \cite[Case N8]{BCC} that the action of $\mathbf{G}_{s,s^2}$ on $X_{n_1,n_1,n_3}$ extends to an action of $\hat{\mathbf{G}}_{s,s^2}$ with signature $(2,5,10).$ This fact coupled with Lemma \ref{l2} imply that
$\Aut(X_{n_1,n_1,n_3}) \cong \hat{\mathbf{G}}_{s,s^2}.$ Moreover, following \cite{BCC}, the action of  $\hat{\mathbf{G}}_{s,s^2}$ on $X_{n_1,n_1,n_3}$ is represented by the ske \begin{equation}\label{max3}\Theta_{n_1,n_1,n_3}: \Delta(2,5,10)\to \hat{\mathbf{G}}_{s,s^2} \, \mbox{ given by } \, y_1\mapsto d, y_2\mapsto abc^{n_1}, y_3\mapsto (dabc^{n_1})^{-1}.\end{equation}

{\it Claim 1.} $X_{n_1,n_1,n_3} \cong X_{n_1,n_3,n_1} \cong X_{n_3,n_1,n_1}.$

\s

Consider the embeddings of $\Delta(5,5,5)$ into $\Delta(2,5,10)$ given by $\iota_2: x_1\mapsto y_2, x_2\mapsto y_3^2, x_3\mapsto y_3^{-1}y_2y_3$ and $\iota_3:x_1\mapsto y_3^2, x_2\mapsto y_3^{-1}y_2y_3, x_3\mapsto y_2.$ The restriction of $\Theta_{n_1,n_1,n_3}$ to $\iota_j(\Delta(5,5,5))$ is a ske $\Delta(5,5,5) \cong \iota_j(\Delta(5,5,5))  \to \mathbf{G}_{s,s^2}$ that is $\mathbf{G}_{s,s^2}$-equivalent to $\theta_{n_1,n_3,n_1}$ for $j=2$ and to $\theta_{n_3,n_1,n_1}$ for $j=3.$

\s

{\it Claim 2.} The Riemann surfaces $X_{1,1,3}, X_{2,2,1}, X_{3,3,4}$ and $X_{4,4,2}$ are pairwise non-isomorphic.

\s

First, observe that if two skes \eqref{max3} were $\hat{\mathbf{G}}_{s,s^2}$-equivalent then there would exist an automorphism $\Psi$ of  $\hat{\mathbf{G}}_{s,s^2}$ that fixes $d$ and sends $abc^{n}$ to $abc^{n'}$ for $n' \neq n$. This automorphism must be given by $$a\mapsto a^x b^y, \, b\mapsto a^{z}b^{w}, \, c\mapsto a^u b^v c^t, \, d\mapsto d$$for some $0\leqslant x,y,z,w,u,v<p$ and $t$ different from 1. The relation $cac^{-1}=a^{s}$ implies that $xs(1-s^{t-1})=0$ and $ys(1-s^{2t-1})=0,$ showing that $x=0$ and $t=3.$ Besides, the relation $cbc^{-1}=b^{s^2}$ implies that  $zs^2(1-s)=0$ showing that $z=0;$ a contradiction. 
The proof of the claim follows after noticing that $\Delta(2,5,10)$ is self-normalising.
\end{proof}

\begin{coroB1} There are precisely twelve orientably-regular hypermaps of Euler characteristic $-2p^2$ of type $\{5,5,5\}$ with orientation-preserving automorphism group isomorphic to $\mathbf{G}_{s,s^2}$. These hypermaps are supported on four non-isomorphic Riemann surfaces, with three hypermaps on each surface.
\end{coroB1}
\begin{proof} As in the proof of Corllary A1, it suffices to prove that the twelve skes \eqref{felix7} are pairwise $\mathbf{G}_{s,s^2}$-inequivalent. Observe that if two of them were $\mathbf{G}_{s,s^2}$-equivalent, then there would be a nontrivial automorphism $\Phi$ of $\mathbf{G}_{s,s^2}$ of the form $$a\mapsto a^x b^y, \, b\mapsto a^{z}b^{w}, \, c\mapsto a^{u}b^{v}c^t \mbox{ for some }0\leqslant x,y,z,w,u,v<p \mbox{ and }t \in \{1,2,3,4\}$$satisfying $\Phi(abc^n)=abc^{n'}$ with $n \neq n'.$  The non-existence of such an automorphism is proved analogously to the proof of Claim 2 in Theorem B.
\end{proof}

\begin{coroB2}
Up to duality, there are exactly two chiral pairs of orientably-regular maps of Euler characteristic $-2p^2$ of type $\{5, 10\}$ with orientation-preserving automorphism group isomorphic to $\hat{\mathbf{G}}_{s,s^2}$. 
\end{coroB2}
\begin{proof} 
Let $\Theta: \Delta(2,5,10) \to \hat{\mathbf{G}}_{s,s^2}$ be a ske. Since the Sylow $2$-subgroups of $\hat{\mathbf{G}}_{s,s^2}$ have order 2, up to conjugation, one may assume that $\Theta(y_1)=d.$  Moreover, as each element of order 5 of $\hat{\mathbf{G}}_{s,s^2}$ has the form $a^xb^yc^l$, we have that $\Theta$ is $\hat{\mathbf{G}}_{s,s^2}$-equivalent to  $$y_1 \mapsto d, \, \, y_2 \mapsto a^xb^yc^l, \,\,  y_3\mapsto (da^xb^yc^l)^{-1} \mbox{ for some } x,y \in \{0, \ldots, p-1\} \mbox{ and } l \in \{1,2,3,4\}.$$ Let $\delta$ be the integer in $\{1, \ldots, p-1\}$ such that $2 \delta \equiv 1 \mbox{ mod } p$. For each $l \in \{1,2,3,4\}$ we set $$x_l := \delta (1-s^l) \, \text{ and } \, y_l:=\delta(1-s^{2l}).$$The fact that $da^xb^yc^ld=a^{2(x-x_l)}b^{2(y-y_l)}c^l$ shows that  $$\mbox{im}(\Theta)=\langle d, a^xb^yc^l, a^{2(x-x_l)}b^{2(y-y_l)} \rangle=\left\{\begin{array}{cc}
 \langle b, a^xb^y c^l, d\rangle \cong (\mathbb{Z}_p \rtimes \mathbb{Z}_5) \rtimes \mathbb{Z}_2 & \text{ if } x= x_l \text{ and }y\neq y_l\\
 \langle a, a^xb^y c^l, d\rangle \cong (\mathbb{Z}_p \rtimes \mathbb{Z}_5) \rtimes \mathbb{Z}_2&\text{ if }  x\neq x_l \text{ and }y= y_l\\
 \langle a^xb^y c^l, d\rangle \cong \mathbb{Z}_{10}&\text{ if }  x= x_l \text{ and }y= y_l
\end{array}\right.$$The surjectivity of $\Theta$ implies that $x \neq x_l$ and $y \neq y_l.$ Note that $x_l$ and $y_l$ are different from 1. Now, if $x \neq 1$ or $y \neq 1$ then the automorphism $\Psi_{x,y}$ of $\hat{\mathbf{G}}_{s, s^2}$ given by $$a \mapsto a^{\alpha}, \, b \mapsto b^{\beta}, \, c \mapsto a^{\delta(1-s)(1-\alpha)} b^{\delta(1-s^2)(1-\beta)}c, \, d \mapsto d \mbox{ where } \alpha=\tfrac{1-x_l}{x-x_l} \mbox { and } \beta=\tfrac{1-y_l}{y-y_l}$$satisfies $\Psi_{x,y}(a^xb^yc^l)=abc^l.$ Thus, we obtain that $\Theta$ is 
 $\hat{\mathbf{G}}_{s,s^2}$-equivalent to a ske \eqref{max3}. These last skes are pairwise $\hat{\mathbf{G}}_{s,s^2}$-inequivalent as seen in the proof of Claim 2 the theorem above. To finish the proof we only need to prove that these maps form two chiral pairs, but this follows directly from the fact that the correspondence $abc^l \mapsto (abc^l)^{-1}$ and $dabc^l \mapsto (dabc^l)^{-1}$ does not define an automorphism of $\hat{\mathbf{G}}_{s,s^2}$ (otherwise, there would be an automorphism sending $c$ to $c^{-1}$ contradicting the relation $cac^{-1}=a^s$).
\end{proof}

\subsection*{The group $\mathbf{G}_{s,s^5}$}Let $p \geqslant 11$ be a prime number such that $p \equiv 1 \m 5,$ and consider the group 
  $$\hat{\mathbf{G}}_{s,s^4}= \langle a,b,c,d :  a^p=b^p=c^5=d^2=[a,b]=(da)^2=(db)^2=1, cac^{-1}=a^{s}, cbc^{-1}=b^{s^4}, dcd=a^{1-s}b^{1-s^4}c\rangle,$$which contains  $\mathbf{G}_{s,s^4}$ as a subgroup.  The proof of the following theorem (and of its corollaries) follows the same ideas as the proof of Theorem B. We will only provide a brief outline of the proof, highlighting the differences with the previous results.

\begin{theoC} There are precisely two non-isomorphic compact Riemann surfaces of genus $1+p^2$ endowed with a group of conformal automorphisms isomorphic to $\mathbf{G}_{s,s^4}$. The conformal automorphism group of each of them is isomorphic to 
$\hat{{\mathbf{G}}}_{s,s^4}$.
\end{theoC}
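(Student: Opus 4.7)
The plan is to follow the strategy of Theorem B closely, with one essential modification that explains why the count drops from four to two. By Lemma \ref{l1}, the action of $\mathbf{G}_{s,s^4}$ on any such $S$ has signature $(5,5,5)$; an identical normalisation argument (conjugate to place a single Sylow $5$-element, then use a diagonal automorphism of the Frattini quotient $\mathbb{Z}_p^2$) shows that every such action is represented, up to $\mathbf{G}_{s,s^4}$-equivalence, by a ske
$$\theta_{n_1,n_2,n_3}: \Delta(5,5,5)\to \mathbf{G}_{s,s^4},\quad x_1\mapsto a^{-s^{n_1}}b^{-s^{4n_1}}c^{n_1},\ x_2\mapsto abc^{n_2},\ x_3\mapsto c^{n_3},$$
with $n_1+n_2+n_3\equiv 0\m 5$. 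Surjectivity produces exactly twelve such skes, each satisfying $n_1=n_2$, $n_1=n_3$ or $n_2=n_3$; in particular $S\cong X_{n_1,n_2,n_3}:=\HH/\mbox{ker}(\theta_{n_1,n_2,n_3})$ for some such triple.

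Next I would verify, following Case N8 of \cite{BCC} exactly as in Theorem B, that the correspondence $a\mapsto a^{-1}$, $b\mapsto b^{-1}$, $c\mapsto a^{1-s}b^{1-s^4}c$ defines an automorphism of $\mathbf{G}_{s,s^4}$ witnessing that the action extends to one of $\hat{\mathbf{G}}_{s,s^4}$ with signature $(2,5,10)$. Lemma \ref{l2} then forces $\Aut(X_{n_1,n_1,n_3})\cong \hat{\mathbf{G}}_{s,s^4}$, represented by a ske $\Theta_{n_1,n_1,n_3}: \Delta(2,5,10)\to \hat{\mathbf{G}}_{s,s^4}$ defined just as in \eqref{max3}. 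Restricting along the three embeddings $\iota_1,\iota_2,\iota_3: \Delta(5,5,5)\hookrightarrow \Delta(2,5,10)$ used in Theorem B yields $X_{n_1,n_1,n_3}\cong X_{n_1,n_3,n_1}\cong X_{n_3,n_1,n_1}$, reducing the problem to the four candidates $X_{1,1,3}, X_{2,2,1}, X_{3,3,4}, X_{4,4,2}$.

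The key new ingredient, absent in the case of $\mathbf{G}_{s,s^2}$, is the extra outer automorphism of $\mathbf{G}_{s,s^4}$ given by $\Phi: a\leftrightarrow b,\ c\mapsto c^{-1}$; this is well-defined precisely because $s^4=s^{-1}$, so conjugation by $c^{-1}$ multiplies $b$ by $s^{-1}=s^4$, matching the relation for $a$ under the swap. A short direct computation yields $\Phi\circ \theta_{n_1,n_2,n_3}=\theta_{-n_1,-n_2,-n_3}$ (all indices mod $5$); since $\Phi$ is an automorphism of $\mathbf{G}_{s,s^4}$, the corresponding kernels coincide and hence $X_{1,1,3}\cong X_{4,4,2}$ and $X_{2,2,1}\cong X_{3,3,4}$, leaving at most two isomorphism classes.

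The main obstacle will be to show that $X_{1,1,3}\not\cong X_{2,2,1}$. Mirroring Claim 2 of Theorem B, such an isomorphism would furnish an automorphism $\Psi$ of $\hat{\mathbf{G}}_{s,s^4}$ fixing $d$ and sending $abc^{n}$ to $abc^{n'}$ with $\{n,n'\}\subseteq\{1,2\}$ and $n\neq n'$. Writing the generic such $\Psi$ as $a\mapsto a^xb^y,\ b\mapsto a^zb^w,\ c\mapsto a^ub^vc^t,\ d\mapsto d$, the relations $cac^{-1}=a^s$ and $cbc^{-1}=b^{s^4}$ constrain $t$ so that the induced map on $c$-exponents is either the identity (forcing $n=n'$) or the negation $t=4$ coming from $\Phi$ (producing the already-accounted pairing $\{(1,4),(2,3)\}$); neither is compatible with $\{n,n'\}=\{1,2\}$. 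Combined with the self-normalising property of $\Delta(2,5,10)$, this finishes the count at exactly two non-isomorphic surfaces.
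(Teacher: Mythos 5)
Your proposal is correct and follows essentially the same route as the paper: the same twelve normalised skes, the same appeal to Case N8 of \cite{BCC} and Lemma \ref{l2}, the same permutation isomorphisms $X_{n_1,n_1,n_3}\cong X_{n_1,n_3,n_1}\cong X_{n_3,n_1,n_1}$, the same extra automorphism $a\leftrightarrow b$, $c\mapsto c^{-1}$ (valid because $s^4=s^{-1}$) pairing $X_{1,1,3}$ with $X_{4,4,2}$ and $X_{2,2,1}$ with $X_{3,3,4}$, and the same non-existence of an automorphism of $\hat{\mathbf{G}}_{s,s^4}$ fixing $d$ and carrying the $c$-exponent $1$ to $2$. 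Your explicit computation that the relations force $t\in\{1,4\}$ is exactly the paper's implicit justification, so no gap remains.
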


\begin{proof} Each such an action of $\mathbf{G}_{s,s^4}$  is represented by one of the twelve surface-kernel  epimorphisms \begin{equation}\label{felix8}\theta_{n_1,n_1,n_3}: \Delta(5,5,5)\to {\mathbf{G}}_{s,s^4} \, \mbox{ given by } \, x_1\mapsto a^{-s^{n_1}}b^{-s^{4n_1}}c^{n_1},\, x_2\mapsto abc^{n_2},\, x_3\mapsto c^{n_3}\end{equation}where  $n_1, n_2, n_3 \in \{1, \ldots, 5\}$ satisfy $n_1+n_2+n_3 \equiv 0 \mbox{ mod }5.$ The rule $a\mapsto b, b\mapsto a, c\mapsto c^{-1}$ defines an automorphism of $\mathbf{G}_{s,s^4}$ that provides the following equivalences: $$\theta_{1,1,3} \cong \theta_{4,4,2}, \, \theta_{1,3,1} \cong \theta_{4,2,4}, \, \theta_{3,1,1} \cong \theta_{2,4,4}\,\, \mbox{ and } \,\,  \theta_{2,2,1} \cong \theta_{3,3,4}, \, \theta_{2,1,2} \cong \theta_{3,4, 3}, \theta_{1,2,2} \cong \theta_{4,3,3}.$$In addition, since there is no automorphism of $\mathbf{G}_{s,s^4}$ that sends $c$ to $c^2$ or to $c^3$, the skes \eqref{felix8} split into the six $\mathbf{G}_{s,s^4}$-equivalence classes given above. If we write
$$Y_{n_1,n_2,n_3}:= \mathbb{H}/{\text{ker}(\theta_{n_1,n_2,n_3})}$$then, by proceeding as in the proof of Theorem B, one sees that $\Aut(Y_{n_1,n_1,n_3}) \cong \hat{\mathbf{G}}_{s,s^4}$ and that the action of this latter group is represented by the ske \begin{equation*}\label{max6}\Theta_{n_1,n_1,n_3}: \Delta(2,5,10)\to \hat{\mathbf{G}}_{s,s^4} \, \mbox{ given by } \, y_1\mapsto d, y_2\mapsto abc^{n_1}, y_3\mapsto (dabc^{n_1})^{-1}.\end{equation*}
Moreover, analogosuly as proved in Claim 1 in the proof of Theorem B, one sees thar the Riemann surfaces $Y_{n_1,n_1,n_3}, Y_{n_1,n_3,n_1}$ and $Y_{n_3,n_1,n_1}$ are isomorphic, and therefore there are at most two non-isomorphic Riemann surfaces among $Y_{n_1, n_2, n_3}$. These classes are represented by  $Y_{1,1,3}$ and $Y_{2,2,1}$. Since there is no automorphism of  $\hat{\mathbf{G}}_{s,s^4}$ that fixes $d$ and sends $c$ to $a^xb^yc^{2}$ for some $x,y \in \{0, \ldots, p-1\}$, coupled with the fact that $\Delta(2,5,10)$ is self-normalising, we conclude that $Y_{1,1,3}$ and $Y_{2,2,1}$ are non-isomorphic.
\end{proof}

\begin{coroC1} There are precisely six orientably-regular hypermaps of Euler characteristic $-2p^2$ of type $\{5,5,5\}$ with orientation-preserving automorphism group isomorphic to $\mathbf{G}_{s,s^4}$. These hypermaps are supported on two non-isomorphic Riemann surfaces; three hypermaps lie on each surface. 
\end{coroC1}

\begin{proof}
It follows directly from the proof of Theorem C.
\end{proof}

\begin{coroC2}
Up to duality, there is exactly two orientably-regular maps of Euler characteristic $-2p^2$ of type $\{5, 10\}$ with orientation-preserving automorphism group isomorphic to $\hat{\mathbf{G}}_{s,s^4}$. They are reflexive.
\end{coroC2}

\begin{proof} By proceeding  as in the proof of Corollary B2, one sees that each ske $\Theta: \Delta(2,5,10) \to \hat{\mathbf{G}}_{s,s^4}$ is $\hat{\mathbf{G}}_{s,s^4}$-equivalent to $\Theta_{1,1,3}, \Theta_{2,2,1}, \Theta_{3,3,4}$ or $\Theta_{4,4,2}.$ Besides, the rule $a\mapsto b, b\mapsto a, c\mapsto c^{-1}, d \mapsto d$ defines an automorphism of $\hat{\mathbf{G}}_{s,s^4}$ yielding the equivalence $\Theta_{1,1,3} \cong \Theta_{4,4,2}$ and $\Theta_{2,2,1} \cong \Theta_{3,3,4}$. As there is no automorphism of  $\hat{\mathbf{G}}_{s,s^4}$ that fixes $d$ and sends $c$ to $a^xb^yc^{2}$ for some $0 \leqslant x, y < p$, we conclude that $\Theta_{1,1,3}$ and $\Theta_{2,2,1}$ are $\hat{\mathbf{G}}_{s,s^4}$-inequivalent, proving the first statement.  To prove that these maps are reflexive, it suffices to notice that the automorphisms of $\hat{\mathbf{G}}_{s,s^4}$ given by
$$\Phi_1: a \mapsto b^{-s}, \, b \mapsto a^{-s^4}, \, c \mapsto c^{-1}, \, d \mapsto a^{-1-s^4}b^{-1-s}d \,\, \mbox{ and } \,\, \Phi_2: a \mapsto b^{-s^2}, \, b \mapsto a^{-s^3}, \, c \mapsto c^{-1}, \, d \mapsto a^{-1-s^3}b^{-1-s^2}d$$are involutions that satisfy $\Phi_l(abc^l)= (abc^l)^{-1}$ and $\Phi_l(dabc^l)=(dabc^l)^{-1}$ for $l=1, 2.$\end{proof}

\subsection*{The group $\mathbf{G}_0$} Let $p \geqslant 19$ be a prime number such that $p\equiv -1 \m 5$. Let $t_1$ and $t_2$ be the integers in $\{1, \ldots, p-1\}$ that solve the equation $y^2+y-1\equiv 0\m p,$ and consider the group $$\hat{\mathbf{G}}_{0}= \langle a,b,c,d \,|\, a^p=b^p=c^5=d^2=[a,b]=(da)^2=(db)^2=1, cac^{-1}=b^{t_2}, cbc^{-1}=(ab)^{t_1}, dcd=a^{t_1^2}b^{2}c\rangle.$$For later use, we recall that $t_1t_2 \equiv t_1+t_2 \equiv -1 \m p.$ Note that $\mathbf{G}_0$ is a subgroup of $\hat{\mathbf{G}}_{0}.$
\begin{theoD}
 There are precisely two non-isomorphic compact Riemann surfaces of genus $1+p^2$ endowed with a group of conformal automorphisms isomorphic to $\mathbf{G}_{0}$. The conformal automorphism group of each of them is isomorphic to 
$\hat{{\mathbf{G}}}_{0}$.
\end{theoD}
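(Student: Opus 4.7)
The plan is to parallel the strategy of Theorems B and C in three stages: first, classify the surface-kernel epimorphisms $\theta:\Delta(5,5,5)\to\mathbf{G}_0$ up to $\mathbf{G}_0$-equivalence; second, exhibit an extension of each relevant action to one of $\hat{\mathbf{G}}_0$ with signature $(2,5,10)$ via Lemma \ref{l2}; and third, use the three embeddings $\iota_1,\iota_2,\iota_3$ of $\Delta(5,5,5)$ into $\Delta(2,5,10)$ to merge isomorphic Riemann surfaces. I would begin by writing $\theta(x_i)=a^{l_i}b^{m_i}c^{n_i}$ with each $n_i\in\{1,2,3,4\}$ (since the Sylow $5$-subgroup of $\mathbf{G}_0$ has order $5$) and $n_1+n_2+n_3\equiv 0 \m 5$. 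The action of $c$ on $\langle a,b\rangle\cong\mathbb{Z}_p^2$ is encoded by a $2\times 2$ matrix $M$ with characteristic polynomial $\lambda^2-t_1\lambda+1$; because $p\equiv -1 \m 5$, its eigenvalues are the primitive fifth roots of unity in $\mathbb{F}_{p^2}\setminus\mathbb{F}_p$, so $\mathbb{Z}_p^2$ is an irreducible $\mathbb{Z}_5$-module. In particular $I-M^{n_3}$ is invertible, allowing a conjugation by a suitable element of $\langle a,b\rangle$ to normalise $\theta(x_3)=c^{n_3}$; since $\mathbb{F}_{p^2}^*$ acts transitively on the nonzero vectors of $\mathbb{Z}_p^2$, a further automorphism of $\mathbf{G}_0$ normalises $\theta(x_2)=abc^{n_2}$. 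This produces twelve canonical skes $\theta_{n_1,n_2,n_3}$, and I set $Z_{n_1,n_2,n_3}:=\mathbb{H}/\text{ker}(\theta_{n_1,n_2,n_3})$.

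Because $M$ and $M^{-1}$ share the characteristic polynomial, they are conjugate in $GL_2(\mathbb{F}_p)$, so $\mathbf{G}_0$ admits an outer automorphism realising $c\mapsto c^{-1}$ up to a $\mathbb{Z}_p^2$-shift; this yields the equivalence $\theta_{n_1,n_2,n_3}\sim\theta_{5-n_1,5-n_2,5-n_3}$ and collapses the twelve skes into six $\mathbf{G}_0$-equivalence classes. No further identifications arise, because $M^{\pm 2}$ have eigenvalues distinct from those of $M$, precluding any automorphism sending $c$ to $c^{\pm 2}$. I would then verify that the rule $a\mapsto a^{-1},\, b\mapsto b^{-1},\, c\mapsto a^{t_1^2}b^2 c$ defines an involutive automorphism $\Phi$ of $\mathbf{G}_0$; the consistency check with the relations $cac^{-1}=b^{t_2}$ and $cbc^{-1}=(ab)^{t_1}$ reduces to the identities $t_1^2+t_1\equiv 1$ and $t_1 t_2\equiv -1 \m p$, after which $\mathbf{G}_0\rtimes_\Phi\mathbb{Z}_2\cong\hat{\mathbf{G}}_0$. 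Following the strategy of Theorem B and \cite{BCC}, the action of $\mathbf{G}_0$ on $Z_{n_1,n_1,n_3}$ extends to an action of $\hat{\mathbf{G}}_0$ represented by the ske
\[
\Theta_{n_1,n_1,n_3}:\Delta(2,5,10)\to\hat{\mathbf{G}}_0,\quad y_1\mapsto d,\; y_2\mapsto abc^{n_1},\; y_3\mapsto (dabc^{n_1})^{-1}.
\]
The key verification is that restricting $\Theta_{n_1,n_1,n_3}$ via the embedding $\iota_1:x_1\mapsto y_3^{-1}y_2 y_3,\, x_2\mapsto y_2,\, x_3\mapsto y_3^2$ reproduces $\theta_{n_1,n_1,n_3}$ up to $\mathbf{G}_0$-equivalence; a direct vector calculation reduces this to the identity $(I-M)(1,1)^{T}=(t_1^{2},2)^{T}$, which is precisely the defining relation $t_1^2=1-t_1$. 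This is the main technical hurdle of the argument. Lemma \ref{l2} then forces $\Aut(Z_{n_1,n_1,n_3})\cong\hat{\mathbf{G}}_0$.

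For the final step, I would follow Claim 1 in the proof of Theorem B: restricting $\Theta_{n_1,n_1,n_3}$ via the remaining embeddings $\iota_2$ and $\iota_3$ yields $Z_{n_1,n_1,n_3}\cong Z_{n_1,n_3,n_1}\cong Z_{n_3,n_1,n_1}$, grouping the six ske classes into two triples with representatives $Z_{1,1,3}$ and $Z_{2,2,1}$. Their non-isomorphism follows from the self-normalising property of $\Delta(2,5,10)$ together with the nonexistence of an automorphism of $\hat{\mathbf{G}}_0$ that fixes $d$ and sends $abc$ to $abc^2$ (which would induce $c\mapsto c^2$ on the quotient, contradicting the eigenvalue argument above).
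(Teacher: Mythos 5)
Your proposal is correct and follows essentially the same route as the paper's proof: normalisation of the $(5,5,5)$-skes to twelve canonical forms, extension to $\hat{\mathbf{G}}_0$ via the involutive automorphism $a\mapsto a^{-1}, b\mapsto b^{-1}, c\mapsto a^{t_1^2}b^2c$ together with \cite[Case N8]{BCC} and Lemma \ref{l2}, the three embeddings of $\Delta(5,5,5)$ into $\Delta(2,5,10)$ to identify permuted triples, and the maximality of $\Delta(2,5,10)$ plus the non-existence of an automorphism carrying $c$ to $c^{2}$ for the final non-isomorphism $Z_{1,1,3}\not\cong Z_{2,2,1}$. The only difference is cosmetic and welcome: where the paper normalises $\theta(x_2)$ and excludes $c\mapsto c^2$ by explicit congruence manipulations (culminating in the contradiction $2t_1+1\equiv 0 \m p$), you exploit the irreducible $\mathbb{F}_{p}[c]\cong\mathbb{F}_{p^2}$-module structure of $\langle a,b\rangle$ and an eigenvalue comparison of $M$ with $M^{\pm 2}$, which is mathematically equivalent.
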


\begin{proof}
Let $S$ be a compact Riemann surface of genus $1+p^2$ endowed with a group of conformal automorphisms isomorphic to $\mathbf{G}_{0}$. The action of $\mathbf{G}_{0}$ is represented by a ske $\theta: \Delta(5,5,5)\to \mathbf{G}_{0}$ that, up to $\mathbf{G}_{0}$-equivalence, we may assume to be of the form \begin{equation}\label{ojo1}\Delta(5,5,5)\to \mathbf{G}_{0} \, \mbox{ given by } \,  x_1\mapsto a^{l_1}b^{m_1}c^{n_1},x_2\mapsto a^{l_2}b^{m_2}c^{n_2},x_3\mapsto c^{n_3},\end{equation}where $l_i, m_i \in \{0, \ldots, p-1\}$ and $n_i \in \{1,\dots, 5 \}$ satisfy $n_1 +n_2 +n_3\equiv 0\m 5$. The surjectivity of \eqref{ojo1} implies that $l_2 \neq 0$ or $m_2 \neq 0.$ Observe that, after considering the automorphism of $\mathbf{G}_{0}$ given by $a \mapsto ab, b \mapsto a^{t_1-1}1b^{t_1}, c \mapsto c$, we can further assume that both $l_2$ and $m_2$ are different from 0. Now, if $\delta \in \{1, \ldots, p-1\}$ satisfies $\delta l_2 \equiv 1 \mbox{ mod }p$ then we apply the automorphism of $\mathbf{G}_{0}$ given by $a \mapsto a^{\delta}, b \mapsto b^{\delta}, c \mapsto c$ to obtain that $\theta$ is $\mathbf{G}_{0}$-equivalent to\begin{equation}\label{ojo3}\Delta(5,5,5)\to \mathbf{G}_{0} \, \mbox{ given by } \,  x_1\mapsto a^{l_1}b^{m_1}c^{n_1},x_2\mapsto ab^{m_2}c^{n_2},x_3\mapsto c^{n_3}.\end{equation}The image of \eqref{ojo3} is $\langle ab^{m_2}, c\rangle$ and therefore it is surjective if and only if $m_2^2+t_2^2-m_2 \not\equiv 0 \m p.$ Now, if $m_2 \neq 1$ then the automorphism $\Phi_{m_2}$ of $\mathbf{G}_{0}$ given by $a\mapsto a^{1+t_1^2(m_2-1)}b^{m_2-1}, b\mapsto a^{t_1^2(1-m_2)}b, c\mapsto c$ satisfies $\Phi_{m_2}(ab^{m_2})=ab.$ We then conclude that $\theta$ is $\mathbf{G}_{0}$-equivalent to\begin{equation}\label{ojo5} \theta_{n_1, n_2, n_3}: \Delta(5,5,5)\to \mathbf{G}_{0} \, \mbox{ given by } \,  x_1\mapsto a^{l}b^{m}c^{n_1}, x_2\mapsto abc^{n_2}, x_3\mapsto c^{n_3}.\end{equation}Note that $l$ and $m$ are uniquely determined by $n_1.$ It follows that  $$S\cong Z_{n_1,n_2,n_3}:=\mathbb{H}/{\text{ker}(\theta_{n_1,n_2,n_3})}$$for some $n_i$ as before. The correspondence $a\mapsto a^{-1}$, $b\mapsto b^{-1}$, $c\mapsto a^{{t_1}^2}b^2 c$ define an automorphism $\Phi$ of $\mathbf{G}_0$ which satisfies $\Phi(g_1,g_2,g_3) = (g_2, g_1, g_2 g_3 {g_2}^{-1})$, where $g_i = \theta_{n_1,n_2,n_3}(x_i)$ for $i =1,2,3$. We now apply the results of \cite[Case N8]{BCC} and Lemma \ref{l2} to conclude that
$\Aut(Z_{n_1, n_1, n_3})\cong \hat{\mathbf{G}}_0,$ and that the action of this latter group is represented by the ske  \begin{equation*} \label{ojo10}\Theta_{n_1,n_1,n_3}: \Delta(2,5,10)\to \hat{\mathbf{G}}_0 \, \mbox{ given by } \,  y_1\mapsto d, y_2\mapsto abc^{n_1}, y_3\mapsto (dabc^{n_1})^{-1}.\end{equation*}We argue as in the previous theorems to ensure that $Z_{n_1,n_1,n_3}, Z_{n_1,n_3,n_1}$ and $Z_{n_3,n_1,n_1}$ are isomorphic. 

\s

{\it Claim 1.} $Z_{1,1,3}\cong Z_{4,4,2}$, $Z_{2,2,1}\cong Z_{3,3,4}$  and $Z_{1,1,3}\not\cong Z_{2,2,1}$.

\s
The automorphism of $\mathbf{G}_0$ given by $a\mapsto ab$, $b\mapsto b^{-1}$, $c\mapsto c^{-1}$ shows that 
 $\theta_{1,1,3}$ is $\mathbf{G}_0$-equivalent to $\theta_{4,4,2}$, and that $\theta_{2,2,1}$ is $\mathbf{G}_0$-equivalent to $\theta_{3,3,4}$. To prove the last statement of the claim, we proceed by contradiction and assume that $Z_{1,1,3}\cong Z_{2,2,1}$. Since the Fuchsian group $\Delta(2,5,10)$ is maximal, the skes $\Theta_{1,1,3}$ and $\Theta_{2,2,1}$ are $\hat{\mathbf{G}}_0$-equivalent, say, by an automorphism $\Phi$. Observe that $\Phi(abc)=abc^2.$ In particular, this implies that there exists an automorphism of $\Psi$ of $\hat{\mathbf{G}}_0$ sending $c$ to $c^2.$ If we write $\Psi(a)= a^\alpha b^\beta$ and  $\Psi(b)= a^{\gamma}b^{\delta}$ for $0\leqslant \alpha
,\beta,\gamma,\delta<p$ then the equalities $\Psi(cac^{-1})=\Psi(b^{t_2})$ and $\Psi(cbc^{-1})=\Psi
((ab)^{t_1})$ turn into to the following equations: $$(1) \, \gamma\equiv \alpha t_1 -\beta t_1^3 \m p, \,\, (2) \, \delta\equiv \alpha t_1 +\beta t_1^2\m p, \,\, (3) \, \gamma t_2+\delta t_1\equiv \alpha+\gamma\m p, \,\, (4)\, \gamma t_2\equiv \beta +2\delta\m p,$$
By substituting (1) and (2) into (3), we obtain  that $
\beta t_1^2(1+2t_1)\equiv \alpha (2+t_1-t_1^2)\equiv \alpha (1+2t_1)\m p$ and therefore, as $1+2t_1\not \equiv 0\m p$, we have that $\beta t_1^2\equiv\alpha\m p$. Similarly, by substituting (1) and (2) into (4), we obtain $-\beta (1+t_1^2)\equiv \alpha(1+2t_1)\m p$. Therefore, $\beta t_1^2(1+2t_1)\equiv -\beta (1+t_1^2)\m p.$
Note that $\beta\not \equiv 0\m p$, since otherwise $\alpha\equiv \beta\equiv 0\m p$.  It follows that $t_1^2(-1-2t_1)\equiv 1+t_1^2\m p$. Equivalently $ 2t_1+1\equiv 0\m p$, a contradiction.
\end{proof}

\begin{coroD1}
There are precisely six orientably-regular hypermaps of Euler characteristic $-2p^2$ of type $\{5,5,5\}$ with automorphism group isomorphic to $\mathbf{G}_{0}$. These hypermaps are supported on two non-isomorphic Riemann surfaces; three hypermaps lie on each surface. 

\end{coroD1}
\begin{proof} By Theorem D, we only need to verify that the skes \eqref{ojo5} form six $\mathbf{G}_0$-equivalence classes. This can be done by employing the same argument used to prove Claim 1 in the proof of Theorem D.
\end{proof}

Before proving the following corollary, we need a technical lemma.

\begin{lemm}\label{ence}
 Consider the matrices in $\mbox{M}(2, \mathbb{Z}_p)$ given by $$M_{\alpha,\beta,1}:=\begin{pmatrix}
\alpha-\delta t_1^2 & \beta-1 \\
\beta-1 & -\alpha t_2^2+\beta-\delta
\end{pmatrix}\,\,\,\,\,\,\,\,\, M_{\alpha,\beta,2}:= \begin{pmatrix}
\alpha-\delta t_1^2-t_1 & \beta-1-\delta t_1 \\
\beta -1 -\delta t_1 & \beta-\alpha t_2^2-\delta-t_2-\delta t_1
\end{pmatrix}$$where  $\alpha, \beta, \delta \in \{0, \ldots, p-1\}$ and $2 \delta \equiv 1 \mbox{ mod }p$. Then  $$\mbox{det}(M_{\alpha,\beta,l}) \equiv 0 \mbox{ mod }p \, \implies \, \langle a^{\alpha} b^{\beta}c^l, d\rangle \neq \mathbf{G}_0' \,\, \mbox{ for } \, \, l=1,2.$$
\end{lemm}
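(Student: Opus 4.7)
The plan is to analyse $H := \langle a^\alpha b^\beta c^l, d\rangle \leq \hat{\mathbf{G}}_0$ by intersecting it with the abelian normal subgroup $N := \langle a, b\rangle \cong \mathbb{Z}_p^2$. Writing $g := a^\alpha b^\beta c^l$ and noting that $dcd = a^{t_1^2}b^2 c$ forces $d$ to commute with $c$ modulo $N$, one has $\hat{\mathbf{G}}_0 / N \cong \mathbb{Z}_{10}$. Since $\gcd(l, 5) = 1$, the subgroup $H$ surjects onto this quotient, so $H = \hat{\mathbf{G}}_0$ if and only if $H\cap N = N$. Moreover, $H\cap N$ is a $\langle c, d\rangle$-invariant subspace of $N$, because conjugation by $g$ on $N$ agrees with conjugation by $c^l$ and $\gcd(l,5)=1$. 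The matrix $C = \left(\begin{smallmatrix} 0 & t_1\\ t_2 & t_1\end{smallmatrix}\right)$ of $c$-conjugation has characteristic polynomial $\lambda^2 - t_1\lambda+1$, whose roots are primitive fifth roots of unity; these do not lie in $\mathbb{Z}_p$ when $p \equiv -1 \m 5$, so $C$ acts irreducibly on $N$, forcing $H\cap N \in \{\{1\}, N\}$.

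To detect which case occurs, I would compute the commutator explicitly. With $v := (\alpha, \beta)$, $w := (t_1^2, 2)$, and $S_l := I + C + \cdots + C^{l-1}$, the relations $dad = a^{-1}$, $dbd = b^{-1}$, $dcd = a^w c$ give $dgd = a^{-v + S_l w}c^l$, whence $[g,d] = g(dgd)^{-1} = a^{2v - S_l w} \in N$. Because $g^5 = 1$ (from $I + C^l + C^{2l} + C^{3l} + C^{4l} = 0$, which holds since the minimal polynomial of $C$ divides $\lambda^4+\lambda^3+\lambda^2+\lambda+1$) and $d^2 = 1$, the intersection $H\cap N$ coincides with the $\langle c\rangle$-submodule of $N$ generated by $[g,d]$; by irreducibility, $H\cap N = \{1\}$ iff $[g,d] = 0$. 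A direct substitution using $t_1^2 \equiv 1-t_1$, $t_1^2 t_2 \equiv -t_1$, $t_1 t_2 \equiv -1$ and $2\delta \equiv 1 \m p$ then shows that, for both $l = 1$ and $l = 2$, the first row of $M_{\alpha,\beta,l}$ equals $\tfrac12(2v - S_l w)$, and that the $(2,2)$-entry vanishes whenever this first row does; consequently $M_{\alpha,\beta,l} = 0$ iff $[g,d] = 0$ iff $H \neq \hat{\mathbf{G}}_0$.

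It then suffices to reduce the vanishing of $\det M_{\alpha,\beta,l}$ to that of the whole matrix. Setting $(U,V) := 2v - S_l w$ and rewriting the $(2,2)$-entry via the same substitutions, one finds (uniformly in $l \in \{1,2\}$) that the second row of $M_{\alpha,\beta,l}$ takes the form $(V/2, (V - Ut_2^2)/2)$, so
\[
4\det M_{\alpha,\beta,l} \equiv -\bigl(t_2^2 U^2 - UV + V^2\bigr) \pmod p.
\]
The discriminant of this binary quadratic form (as a quadratic in $U$) equals $V^2(1-4t_2^2) = V^2(4t_2 - 3)$, so it suffices to prove that $4t_2 - 3$ is a quadratic non-residue modulo $p$. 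For this I would observe that the norm form $a^2 + a b t_2 + b^2$ is anisotropic over $\mathbb{Z}_p$---otherwise a primitive fifth root of unity would lie in $\mathbb{Z}_p$, contradicting $p \equiv -1 \m 5$---so its discriminant $t_2^2 - 4 \equiv -3 - t_2$ is a non-residue; combining this with the identity $(-3 - t_2)(4t_2 - 3) = -5(t_2 - 1)$, the equality $\chi(5) = 1$ (where $\chi$ denotes the Legendre symbol modulo $p$), and the identification $t_2 - 1 = -\phi^2$ with $\phi = (1+\sqrt 5)/2$ the golden ratio (so $\chi(t_2-1) = \chi(-1)$), one concludes $\chi(4t_2-3) = -1$. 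Hence $\det M_{\alpha,\beta,l} \equiv 0 \m p$ forces $(U,V) = (0,0)$, giving $M_{\alpha,\beta,l} = 0$ and $H \neq \hat{\mathbf{G}}_0$. The main technical obstacle is precisely this non-residue property of $4t_2-3$, which requires a Legendre-symbol argument rooted in the arithmetic of $\mathbb{Q}(\sqrt 5)$; the preceding matrix identification is mechanical but calls for careful bookkeeping of the relations among $t_1$, $t_2$ and $\delta$.
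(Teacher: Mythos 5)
Your proposal is correct, and while it shares its opening computation with the paper's proof (both identify the matrix $M_{\alpha,\beta,l}$ with the element $h=v(dvd)^{-1}=a^{2v-S_lw}\in\langle a,b\rangle$, so that $\det M_{\alpha,\beta,l}$ measures the linear dependence of $h$ and $chc^{-1}$), the second half is genuinely different. The paper, having shown under $\det M_{\alpha,\beta,l}\equiv 0$ that $\langle h, vhv^{-1}\rangle$ is trivial or cyclic of order $p$, proceeds by an explicit word-by-word induction (writing every order-$p$ element of $H$ as a product of the elements $h_i=v^i(dvd)^{-i}$) to conclude that $H$ has at most one subgroup of order $p$ and hence cannot equal $\hat{\mathbf{G}}_0$. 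You instead exploit the module structure: since the $c$-action on $N=\langle a,b\rangle$ has characteristic polynomial $\lambda^2-t_1\lambda+1$ with roots the primitive fifth roots of unity, which are absent from $\mathbb{F}_p$ when $p\equiv -1 \bmod 5$, the $\langle c\rangle$-module $N$ is irreducible, so $H\cap N\in\{1,N\}$ and everything reduces to whether $h=1$; you then show $4\det M_{\alpha,\beta,l}=-(t_2^2U^2-UV+V^2)$ is an anisotropic form because $4t_2-3$ is a non-residue (your Legendre-symbol chain checks out, though the shorter route is $4t_2-3=t_2^2(t_1^2-4)$ with $t_1^2-4$ the discriminant of the irreducible characteristic polynomial, and $t_2-1=-t_2^2$ gives $\chi(t_2-1)=\chi(-1)$ immediately). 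Your approach buys a cleaner and strictly stronger conclusion --- $\det M_{\alpha,\beta,l}\equiv 0$ forces $H\cong\mathbb{Z}_{10}$, and in particular the paper's intermediate case $\langle h,vhv^{-1}\rangle\cong\mathbb{Z}_p$ is vacuous --- at the cost of the extra arithmetic input on quadratic residues, which is precisely where the hypothesis $p\equiv -1\bmod 5$ enters your argument; the paper's combinatorial route avoids that input but does more bookkeeping.
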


\begin{proof}
We assume $\mbox{det}(M_{\alpha,\beta,l}) \equiv 0 \mbox{ mod }p$. We write $$v:=a^\alpha b^\beta c^l, \,\,  h:=v(dvd)^{-1} \, \mbox{ and } \, H:=\langle d, v\rangle.$$Note that $h$ is the identity or has order $p.$ 

\s

{\it Claim 1.} The subgroup $\langle h, vh v^{-1}\rangle$ of $\mathbf{G}_0'$  is trivial or is isomorphic to $\Z_p$. 

\s

Assume $l=1.$ If $\beta=1$ then the fact that $\text{det}(M_{\alpha,\beta, 1})=0$ implies that $\alpha=\delta t_1^2$. It follows that $[v,d]=1$ and therefore $h$ is the identity. If $\beta\neq 1$ then we can write\begin{align*}
\langle h, vhv^{-1}\rangle&=\langle a^{2\alpha-t_1^2}b^{2\beta -2}, a^{2t_1(\beta -1)}b^{2t_1(\beta -1)+ t_2(2\alpha- t_1^2)}\rangle\\ &= \langle a^{2\alpha-t_1^2}b^{2\beta-2}, a^{2\alpha-t_1^2}b^{(2\alpha-t_1^2)(2t_1(\beta-1))^{-1}(2t_1(\beta -1)+ t_2(2\alpha- t_1^2))}\rangle.
\end{align*}The assumption $\text{det}(M_{\alpha,\beta, 1})=0$ implies$$2\beta -2\equiv (2\alpha-t_1^2)(2t_1(\beta-1))^{-1}(2t_1(\beta -1)+ t_2(2\alpha- t_1^2)) \m p,$$and consequently $\langle h, vhv^{-1}\rangle$ is  isomorphic to $\Z_p.$ We now assume $l=2$ and argue  similarly. If $\beta=1+\delta t_1$ then $\text{det}(M_{\alpha,\beta, 2})=0$ implies that $\alpha=\delta t_1^2+t_1$ and therefore $h$ is the identity. If $\beta\neq 1+\delta t_1$ then $$(2\beta-1+t_2)^{-1}(t_1(2\beta-1)-1+2\alpha t_2+t_2^2)\equiv t_1(2\beta-1)-1 \m p,$$ and hence
\begin{align*}
&\langle h, vhv^{-1}\rangle=\langle a^{2\alpha+t_2}b^{2\beta-1+t_2}, a^{t_1(2\beta-1)-1}b^{t_1(2\beta -1)-1+2\alpha t_2+t_2^2} \rangle=\\ &\langle  a^{(2\beta-1+t_2)^{-1}(t_1(2\beta-1)-1+2\alpha t_2+t_2^2)(2\alpha+t_2)}b^{t_1(2\beta-1)-1+2\alpha t_2+t_2^2}, a^{t_1(2\beta-1)-1}b^{t_1(2\beta -1)-1+2\alpha t_2+t_2^2} \rangle \end{align*}is isomorphic to $\Z_p.$ This proves the claim. 

\s

If we write $h_i:=v^i(dvd)^{-i}$ for $1\leqslant i \leqslant 4$ (note that $h_1=h$) then we have that $$
h_i=v^i(dvd)^{-i}=v\big[v^{i-1} (dvd)^{-(i-1)}\big] (dvd)^{-1}=vh_{i-1}(dvd)^{-1}=(vh_{i-1}v^{-1})h_1$$for $i=2,3,4.$ It follows from the claim above that $h_i \in \langle h\rangle$ for each $i.$ 

\s

{\it Claim 2.} If $\langle h \rangle$ is nontrivial then it is the unique subgroup of $H$ of order $p$. 

\s

Assume $(\alpha,\beta,l)\not \in \{(\delta t_1^2,1,1),(\delta t_1^2+t_1,1+\delta t_1,2)\}$ since otherwise $\langle h \rangle$ is trivial. Let $g$ be an element of $H$ of order $p.$ It follows that $g$ can be writen in one of the following  ways:$$g=(v^{m_1}d) \cdots (v^{m_k}d) \,\,\, \mbox{ or } \,\,\, g=(dv^{m_1})\cdots (dv^{m_k}) \mbox{ for an even integer $k$, or}$$
 $$g=v^{m_1}(dv^{m_2})\cdots (dv^{m_k}) \,\,\, \mbox{ or } \,\,\, g=d(v^{m_1}d)\cdots (v^{m_k}d)\mbox{ for an odd integer $k$.}$$In each case, the equality $\Sigma_{j=1}^k m_j\equiv 0\m p$ holds. By applying an inductive argument, it can be seen that each $g$ as before can be written as a product of powers of $h_1, h_2, h_3, h_4$ (for instance, in the first case $g$ is equal to $h_{m_1}h_{m_1+m_2}^{-1}h_{m_1+m_2+m_3}\cdots h_{m_1+\dots +m_{k-1}}$) and consequently $g \in \langle h \rangle.$ In other words, $\langle h \rangle$ contains all the elements of $H$ of order $p$, and the claim follows. 

\s

Finally, observe that if $\langle h \rangle$ is trivial then 
$[d,v]=1$ and thus $H\cong \Z_{10}$ and, in particular,  $H \neq \mathbf{G}_0'$. If $\langle h \rangle$ is nontrivial, then $H$ has a unique subgroup of order $p$, showing that $H\neq \mathbf{G}_0'$. 
\end{proof}

\begin{coroD2}
Up to duality, there are precisely two orientably-regular maps of Euler characteristic $-2p^2$ of type $\{5, 10\}$ with automorphism group isomorphic to $\hat{\mathbf{G}}_{0}$. They are reflexive.
\end{coroD2}

\begin{proof} Let $\Theta: \Delta(2,5,10) \to \hat{\mathbf{G}}_{0}$ be a ske. Observe that $\Theta$ is given by $$y_1\mapsto a^r b^s d, \, y_2\mapsto a^{\alpha} b^{\beta} c^l \mbox{ and }y_3\mapsto (a^r b^s d a^{\alpha}b^{\beta} c^{l})^{-1} ,$$ for some $r,s,\alpha, \beta\in \{0,\dots
p-1\}$  and $l\in \{1,2,3,4\}$.
\s

{\it Claim 1. } $\Theta$ is $\hat{\mathbf{G}}_{0}$-equivalent to $\Theta_{1,1,3}$ or  $\Theta_{2,2,1}$.

\s

The fact that the rule $a \mapsto b^{1-t_2}, b \mapsto a, c \mapsto c^{-1}, d \mapsto b^{t_1+1}d$ is an automorphism of ${\bf G}_0'$ allows us to assume, up to $\hat{\mathbf{G}}_{0}$-equivalence, that $l=1,2$. Moreover, after a suitable conjugation, 
 we may further assume that $r=s=0$. For each $x,z\in \{0,\dots p-1\}$ not simultaneously zero, consider the transformation $\phi_{x,z}: {\bf G}_0'\to {\bf G}_0'$ defined by \begin{equation*}\label{celu}a\mapsto a^{x}b^{-zt_2^2},\ b\mapsto a^{z}b^{x+z},\, c\mapsto a^{\delta(t_1^2-xt_1^2-2z)}b^{\delta(2-z-2x)}c,\, d\mapsto d,\end{equation*}where $\delta \in \{1, \ldots, p-1\}$ satisfies $2\delta \equiv 1 \mbox{ mod }p.$  The equality $\phi_{x,z}(a^\alpha b^\beta c^l)=abc^l$ can be rewritten matricially as
 $$M_{\alpha,\beta,1}  \begin{pmatrix}
x \\ z
\end{pmatrix} =
 \begin{pmatrix}
1-\delta t_1^2  \\
0 
\end{pmatrix} \mbox{ and } M_{\alpha,\beta,2}  \begin{pmatrix}
x \\ z
\end{pmatrix}  =\begin{pmatrix}
\delta t_1^2  \\
-\delta t_1
\end{pmatrix}$$where 
 $M_{\alpha,\beta,l}$ is as in Lemma \ref{ence}. We recall that the surjectivity of $\Theta$ implies that $\langle d, a^\alpha b^\beta c^l\rangle = {\mathbf G}_0'$.  Thus, Lemma \ref{ence} implies that the aforementioned linear systems have a solution, showing that there are $x_l$ and $z_l$ such that $\phi_{x_l,z_l}(a^\alpha b^\beta c^l)=abc^l$ for $l=1,2$. It follows that, to prove our claim, we only need to verify that $\phi_{x_l, z_l}$ is in fact an automorphism of  ${\bf G}_0'$. To accomplish that it suffices to prove that $$P_l:=\langle \phi_{x_l, z_l}(a), \phi_{x_l, z_l}(b) \rangle \cong \mathbb{Z}_p^2.$$ For the case $l=1$ we have that \[  
\begin{pmatrix}
x_1 \\ z_1
\end{pmatrix} = \frac{1}{\text{det}(M_{\alpha,\beta,1})}\begin{pmatrix}
(-\alpha t_2^2+\beta -\delta)(1-\delta t_1^2) \\ (1-\beta)(1-\delta t_1^2)
\end{pmatrix} .
\]
If $z_1=0$ then $P_1= \langle a^{x_1}, b^{x_1}\rangle$ and $\beta=1$.
As $\text{det}(M_{\alpha,\beta,1})\neq 0$, we have that $\alpha\neq \delta t_1^2$ and therefore $x_1\neq 0$. Thus,  $P_1= \langle a, b\rangle\cong \Z_p^2.$ 
Now, if $z_1\neq 0$ then $$P_1= \langle a^{x_1}b^{-z_1t_2^2} , a^{z_1} b^{x_1+z_1}\rangle = \langle a^{x_1}b^{-z_1t_2^2} , a^{x_1} b^{x_1z_1^{-1}(x_1+z_1)}\rangle.$$The fact that $\text{det}(M_{\alpha,\beta,1})\neq 0$ implies that $-z_1^2t_2^2\nequiv x_1(x_1+z_1) \mbox{ mod }p$ and hence $P_1 \cong  \Z_p^2.$ Similarly, for the case $l=2$ we have that \[  
\begin{pmatrix}
x_2 \\ z_2
\end{pmatrix} = \frac{1}{\text{det}(M_{\alpha,\beta,2})}\begin{pmatrix}
-\delta t_1^2-\delta^2t_1^3-\delta\alpha +\delta\beta \\ t_1^2+\delta t_1^3-\delta t_1\alpha-\delta t_1^2\beta
\end{pmatrix}.
\]
Observe that either $x_2\neq 0$ or $z_2\neq 0$, because otherwise $\beta=1+\delta t_1$ and $\alpha= \delta t_1^2+t_1$ contradicting the fact that $\text{det}(M_{\alpha,\beta,2}) \neq 0$. Now, we argue analogously as in the previous case to conclude that $P_2 \cong  \Z_p^2.$ This ends the proof of Claim 1.

\s

The fact that $\Theta_{1,1,3}$ and $\Theta_{2,2,1}$ are $\hat{\mathbf{G}}_{0}$-inequivalent was already proved in  Claim 1 in the proof of Theorem D.  Finally, the last statement follows from the fact that 
$$a \mapsto a^{2t_1}b^{t_1-1}, b \mapsto a^{-t_1}b^{-2t_1}, c \mapsto c^{-1}, d \mapsto a^{t_1-1}b^{t_2-1}d$$is an automorphism of $\hat{\mathbf{G}}_{0}$ that satisfies 
$abc \mapsto (abc)^{-1}$ and $dabc \mapsto (dabc)^{-1}$. \end{proof}

\subsection*{Proof of the main result} As each group of order $10p^2$ has a subgroup of order $5p^2$ for $p >5,$ the proof of the main theorem follows directly from the theorems and their corollaries proved in this section.

\section{Remarks} \label{remass}We end this article with some remarks.

\s

{\bf (1)} The Riemann surfaces appearing in this article are all non-hyperelliptic, namely, they do not arise as two-fold branched covers of $\mathbb{P}^1$. If so, the automorphism group of them must have necessarily a central involution, but, for instance, the involutions in $\hat{\mathbf{G}}_{s,s^2}$ and $\hat{\mathbf{G}}_{s,s^4}$ are of the form $a^x b^y d$, and none of them is central (note that $[a^x b^y d, a]=a^{-2}\neq 1$). 

\s

{\bf (2)} As expected, the behaviour for the small primes that are uncovered by our results is irregular. The cases $p=2$ and $p=5$ yield, up to duality, only one orientably-regular map each. They have genus $5$ and  $26$ with orientation-preserving automorphism groups of order 40 and 90, respectively. These maps are labeled as R5.8 and R26.5 in the list {\it All orientably-regular maps with rotation group of order at most 400} in \cite{C}. By contrast, the case $p=3$ is not realised. 
\s

{\bf (3)}  The smallest prime covered by our theorem is  $p=11,$ namely, orientably-regular maps of genus 122 with orientation-preserving automorphism group of order 1210. These maps appear in \cite{C} as follows. The four chiral pairs are listed in {\it All chiral rotary maps on orientable surfaces with up to 1000 edges} as the first four entries in the case of 605 edges. The two reflexive ones are listed in {\it All fully regular maps on closed surfaces with up to 1000 edges,} as the second and third entries in the case of 605 edges.

\s

{\bf Data availability.} No data was used for the research described in the article.


\end{document}